\newcommand {\ra}{\tr}
\newcommand {\ras}{\tr^*}
\newcommand {\G}{\Gamma}
\newcommand {\m}{\mu}
\newcommand {\be}{\beta}
\newcommand {\al}{\alpha}
\newcommand {\sig}{\sigma}
\newcommand {\la}{\lambda}
\newcommand {\ptv}{\; ..., \;}
\newcommand {\pts}{ \; ... \; }
\newcommand {\peq}{\preceq}
\def\G{\Gamma}
\def\l{\lambda}
\def\m{\mu}
\def\f{\rightarrow}
\def\tr{\triangleright}
\def\v{\vdash}
\def\<{\langle}
\def\>{\rangle}
\def\F{\displaystyle\frac}
\begin{document}
\begin{frontmatter}
  \title{Why the usual candidates of reducibility do not work for
the symmetric $\la \mu$-calculus}
 \author{Ren\'e DAVID}
  \address{Equipe de Logique, Universit\'e de Savoie\\
    73376 Le Bourget du Lac, France} \author{Karim NOUR}
  \address{Equipe de Logique, Universit\'e de Savoie\\
    73376 Le Bourget du Lac, France}  \thanks[myemail]{Email:
    \href{mailto:david@univ-savoie.fr} {\texttt{\normalshape
        david@univ-savoie.fr}}} \thanks[coemail]{Email:
    \href{mailto:nour@univ-savoie.fr} {\texttt{\normalshape
        nour@univ-savoie.fr}}}

\begin{abstract}
The  symmetric $\lambda \mu$-calculus is the  $\lambda
\mu$-calculus introduced by Parigot in which the reduction rule
$\m'$, which is the symmetric of $\m$, is added. We give examples
explaining why the technique using the usual candidates of
reducibility does not work. We also prove a standardization
theorem for this calculus.
\end{abstract}
\begin{keyword}
 $\lambda\mu$-calculus, reducibility.
\end{keyword}
\end{frontmatter}

\section{Introduction}

Since it has been understood that  the Curry-Howard isomorphism
relating proofs and programs  can be extended to classical logic,
various  systems have been introduced:  the $\l_c$-calculus
(Krivine \cite{Kri}), the $\la_{exn}$-calculus (de Groote
\cite{deG4}), the $\l \mu$-calculus (Parigot \cite{Par1}), the
$\lambda^{Sym}$-calculus (Barbanera \& Berardi \cite{BaBe}), the
$\lambda_{\Delta}$-calculus (Rehof \& Sorensen \cite{ReSo}), the
$\overline{\lambda}\mu\tilde{\mu}$-calculus (Curien \& Herbelin
\cite{cuhe}),  ...

The first calculus which respects the intrinsic symmetry of
classical logic is $\lambda^{Sym}$. It is somehow different from
the previous calculi since the main connector is not the arrow as
usual but the connectors {\em or} and {\em and}. The symmetry of
the calculus comes from the de Morgan laws.

The second calculus respecting this symmetry has been
$\overline{\lambda}\mu\tilde{\mu}$. The logical part is the
(classical) sequent calculus instead of natural deduction.

Natural deduction is not, intrinsically, symmetric but Parigot has
introduced the so called {\em Free deduction} \cite{Pa01} which is
completely symmetric. The  $\l \mu$-calculus  comes from there. To
get a confluent calculus he had, in his terminology, to fix the
inputs on the left. To keep the symmetry, it is enough to keep the
same terms and to add a new reduction rule (called the
$\mu'$-reduction) which is the symmetric rule of the
$\mu$-reduction and also corresponds  to the elimination of a
 cut. We get then a symmetric calculus that is called the
{\em symmetric $\lambda \mu$-calculus}.

The $\m'$-reduction has been considered by Parigot for the
following reasons. The $\l \mu$-calculus (with the
$\beta$-reduction and the $\mu$-reduction) has good properties :
confluence in the un-typed version, subject reduction and strong
normalization in the typed calculus. But this system has, from a
computer science point of view, a drawback: the unicity of the
representation of data is lost. It is known that, in the
$\la$-calculus, any term of type $N$ (the usual type for the
integers) is $\be$-equivalent to a Church integer. This no more
true in the $\la\m$-calculus and we can find normal terms of type
$N$ that are not Church integers. Parigot has remarked that by
adding the $\mu'$-reduction and some simplification rules the
unicity of the representation of data is recovered and subject
reduction is preserved, at least for the simply typed system, even
though the confluence is lost.

Barbanera \& Berardi proved the strong normalization of the
$\lambda^{Sym}$-calculus by using candidates of reducibility but,
unlike the usual construction (for example for Girard's system
$F$), the definition of the interpretation of a type needs a
rather complex fix-point operation. Yamagata \cite{Yam} has used
the same technique to prove the strong normalization of the
$\be\m\m'$-reduction where the types are those of system $F$ and
Parigot, again using the same ideas, has extended Barbanera \&
Berardi's result to a logic with second order quantification.

The following property trivially holds in the $\l\m$-calculus:
 \\If
$(\la x M \; N \; P_1 ... P_n) \ras (\l x M' \; N' \; P'_1 ...
P'_n) \tr (M'[x:=N'] \; P'_1 ... P'_n)$, then we may start the
reduction by reducing the $\be$ redex, i.e $(\l x M \; N \; P_1
... P_n) \tr (M[x:=N] \; P_1 ... P_n) \ras (M'[x:=N'] \; P'_1 ...
P'_n)$. This point is the key in the proof of two results for this
calculus:

\medskip

\noindent {\bf (1)} If $N$ and $(M[x:=N] \; P_1 ... P_n)$ are in
$SN$,
  then so is $(\l x M \; N \; P_1 ... P_n)$. Similarly, if $N$ and
  $(M[\al=_rN] \; P_1 ... P_n)$ are in $SN$,
  then so is $(\m \al M \; N \; P_1 ... P_n)$. They are at the
  base
  of the proof of the strong normalization of the typed
  calculus.

\noindent {\bf (2)}   The standardization theorem.

\medskip

Even though this result remains (trivially) true in the symmetric
$\l\m$-calculus and the standardization theorem still holds in
this calculus, point (1) above is no more true. This simply comes
from the fact that an infinite reduction of $(\la x M \; N)$ does
not necessarily reduce the $\be$ redex (and similarly for $(\m \al
M \; N)$) since it can also reduce the $\m'$ redex.

The other key point in the proof of the strong normalization of
typed
  calculus is the following property which remains true in the symmetric $\l\m$-calculus.

\medskip

\noindent {\bf (3)}  If $M_1, ..., M_n$ are in $SN$, then so is
$(x \; M_1 ...\;
  M_n)$.
\medskip

This paper is organized as follows. Section \ref{s2} defines the
symmetric $\l\m$-calculus and its reduction rules. We give the
proof of (3) in section \ref{xti}. Section \ref{ce} gives  the
counter-examples for (1). Finally we prove the standardization
theorem in section \ref{sta}.

\section{The symmetric $\l\m$-calculus }\label{s2}

The set (denoted as ${\cal T}$) of $\l\m$-terms or simply terms is
defined by the following grammar where $x,y,...$ are
$\lambda$-variables and $\al, \be, ...$ are $\mu$-variables:
$$
{\cal T} ::= x \mid \l x {\cal T} \mid ({\cal T} \; {\cal T}) \mid
\mu \al {\cal T} \mid   (\al \; {\cal T})
$$

Note that we adopt here a more liberal syntax (also called de
Groote's calculus) than in the original calculus since we do not
ask that a $\m \al$ is immediately followed by a $(\be \; M)$
(denoted $[\be] M$ in Parigot's notation). 

Even though this paper is only concerned with the un-typed
calculus, the $\l\m$-calculus comes from a Logic and, in
particular, the $\m$-constructor comes from a logical rule. To
help the reader un-familiar with it, we give below the typing and
the reduction  rules.

The types are those of the simply typed $\l\m$-calculus i.e. are
built from atomic formulas and the constant symbol $\perp$ with
the connector $\rightarrow$. As usual $\neg A$ is an abbreviation
for $A \rightarrow \perp$.

 The typing rules are given by figure 1 below
where $\G$ is a context, i.e. a set of declarations of the form $x
: A$ and $\al : \neg A$ where $x$ is a $\l$ (or intuitionistic)
variable, $\al$ is a $\m$ (or classical) variable and $A$ is a
formula.

\begin{center}
$\F{}{\G , x : A \v x : A} \, ax$

\medskip
$\F{\G, x : A \v M : B} {\G \v \l x M : A \f B} \, \f_i$
\hspace{0.5cm}  $\F{\G \v M : A \f B \quad \G \v N : A} {\G \v (M
\; N): B }\, \f_e$

\medskip

$\F{\G , \al : \neg A  \v M : \bot} {\G \v \mu \al M : A }  \,
\bot_e$ \hspace{0.5cm} $\F{\G , \al : \neg A  \v M : A} {\G, \al :
\neg A  \v ( \al \; M) : \bot } \, \bot_i$
\medskip

Figure 1.
\end{center}

Note that, here, we also have changed Parigot's notation but these
typing rules are those of his classical natural deduction. Instead
of writing
$$M : (A_1^{x_1 }, ..., A_n^{x_n }  \vdash   B,  C_1^{\al_1},
..., C_m^{\al_m })$$ we have written
$$x_1 : A_1, ..., x_n : A_n,
\al_1: \neg C_1, ..., \al_m : \neg C_m \vdash M : B$$

The cut-elimination procedure corresponds to the reduction rules
given below. There are three kinds of cuts.

\begin{itemize}
\item A {\em logical cut}  occurs when the introduction of the
 connective $\f$ is immediately followed by its
 elimination.  The corresponding reduction rule (denoted by $\be$) is:

$$(\l x M \; N) \tr M[x:=N]$$

\item A {\em classical cut} occurs when $\bot_e$ appears as the left
 premiss of a $\f_e$. The corresponding reduction rule (denoted by
 $\m$) is:

$$(\mu \al M \; N) \tr \mu \al M[\al=_r N]$$

where $M[\al=_r N]$ is obtained by replacing each sub-term of $M$
of the form $(\al \; U)$ by $(\al \; (U \; N))$.

\item A {\em symmetric classical cut}  occurs when $\bot_e$ appears as
 the right premiss of a $\f_e$. The corresponding reduction rule (denoted by $\m'$) is:

$$(M \; \mu \al N) \tr \mu \al N[\al=_l M]$$

 where $N[\al=_l M]$
is obtained by replacing each sub-term of $N$ of the form $(\al \;
U)$ by $(\al \; (M \; U))$.
\end{itemize}

\noindent {\bf Remark}

It is shown in \cite{Par1} that the $\be\m$-reduction is confluent
but neither $\m\m'$ nor $\be\m'$ is. For example $(\m \al x \, \m
\be y)$ reduces both to $\m \al x$ and to $\m \be y$. Similarly
$(\la z x \; \m \be y)$ reduces both to $x$ and to $\m \be y$.

\medskip

The following property is straightforward.

\begin{theorem}

If $\G \vdash M : A$ and $M \tr M'$ then $\G \vdash M' : A$.

\end{theorem}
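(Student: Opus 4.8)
The plan is to proceed by induction on the derivation of $M \tr M'$. The reduction relation $\tr$ is the compatible (contextual) closure of the three basic rules $\beta$, $\mu$, $\mu'$, so all the congruence cases — reduction under $\lambda x$, under $\mu\alpha$, inside $(\alpha\;-)$, or in either argument of an application — follow at once from the induction hypothesis together with the observation that each typing rule of Figure 1 is compositional: the type assigned to a term is determined by the types assigned to its immediate subterms, in the same context (or one extended by a single declaration). Hence it suffices to treat the case where $M$ is itself one of the three redexes and $M'$ its contractum.

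Before that I would isolate the substitution lemmas, each proved by a routine induction on the term substituted into (using the Barendregt convention so no variable capture occurs), plus the trivial weakening lemma that an unused declaration may be added to any context. First, the ordinary substitution lemma: if $\Gamma, x:A \vdash M : B$ and $\Gamma \vdash N : A$, then $\Gamma \vdash M[x:=N] : B$. Second, the two structural substitution lemmas. For the right version: if $\Gamma, \alpha : \neg(A \rightarrow B) \vdash M : \bot$ and $\Gamma \vdash N : A$, then $\Gamma, \alpha : \neg B \vdash M[\alpha =_r N] : \bot$; here every subterm $(\alpha\;U)$ of $M$, previously typed by $\bot_i$ from $U : A\rightarrow B$, becomes $(\alpha\;(U\;N))$, typed from $(U\;N) : B$ via $\rightarrow_e$ and then $\bot_i$ against the new declaration $\alpha : \neg B$. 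Symmetrically, the left version: if $\Gamma \vdash M : A \rightarrow B$ and $\Gamma, \alpha : \neg A \vdash N : \bot$, then $\Gamma, \alpha : \neg B \vdash N[\alpha =_l M] : \bot$.

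With these in hand the three base cases are short. For $\beta$: from $\Gamma \vdash (\lambda x M\;N) : B$ one extracts $\Gamma, x:A \vdash M : B$ and $\Gamma \vdash N : A$ for some $A$, and the ordinary substitution lemma gives $\Gamma \vdash M[x:=N] : B$. For $\mu$: from $\Gamma \vdash (\mu\alpha M\;N) : B$ one gets $\Gamma \vdash \mu\alpha M : A \rightarrow B$ and $\Gamma \vdash N : A$, hence $\Gamma, \alpha : \neg(A\rightarrow B) \vdash M : \bot$; the right structural lemma yields $\Gamma, \alpha:\neg B \vdash M[\alpha =_r N] : \bot$, and $\bot_e$ gives $\Gamma \vdash \mu\alpha M[\alpha =_r N] : B$. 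The case $\mu'$, starting from $\Gamma \vdash (M\;\mu\alpha N) : B$, is handled identically using the left structural lemma.

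There is no real obstacle; the only step needing a little care is the bookkeeping in the structural substitution lemmas, where the classical variable $\alpha$ changes type (from $\neg(A\rightarrow B)$ to $\neg B$, and dually for $\mu'$), and one checks that this causes no interference — it cannot, since $A\rightarrow B$ is exactly the type $\alpha$ negates and the types occurring in the premises do not mention $\mu$-variables. Because the rule $\mu'$ is the mirror image of $\mu$ under exchanging the two premises of $\rightarrow_e$, its case is obtained from that of $\mu$ by a purely mechanical symmetry, which is why the statement can be called straightforward.
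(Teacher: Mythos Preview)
Your argument is correct and is exactly the standard one the paper has in mind; the paper itself gives no proof beyond the remark that the property is ``straightforward.'' The only cosmetic point is that, for the induction to go through, the structural substitution lemmas should be stated with an arbitrary conclusion type $C$ rather than $\bot$, but this is the obvious generalization and does not affect your plan.
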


\section{If $M_1, ..., M_n$ are in $SN$, then so is $(x \; M_1
...\; M_n)$} \label{xti}

The proofs  are only sketched. More details can be found in
\cite{tlca} where an arithmetical proof of the strong
normalization of the $\be\m\m'$-reduction for the simply typed
calculus is given.

\begin{definition}
\begin{itemize}
\item $cxty(M)$ is the number of  symbols occurring in $M$.
\item  We denote by  $N \leq M$ (resp. $N < M$) the fact that $N$ is a sub-term
  (resp. a strict sub-term) of $M$.

\item The reflexive
and transitive closure of $\tr$ is denoted by $\tr^*$.
\item If $M$ is in $SN$
  i.e. $M$ has no infinite reduction, $\eta(M)$ will denote the length
of the longest reduction starting from $M$.

  \item We denote by $N \prec M$ the fact that $N \leq M'$ for some $M'$ such that $M \ras M'$
   and either $M \tr^+ M'$ or $N < M'$. We denote by $\preceq$ the reflexive closure of $\prec$.

\end{itemize}
\end{definition}

\begin{lemma}\label{lammu}
\begin{enumerate}
\item If $(M \; N) \ras \l x P$, then $M \ras \l y M_1$ and
$M_1[y := N] \ras \l x P$.
\item If $(M \; N) \ras \m \al P$, then either ($M \ras \l y M_1$ and
$M_1[y := N] \ras \m \al P$) or ($M \ras \m \al M_1$ and $M_1[\al
=_r N] \ras P$) or ($N \ras \m \al N_1$ and $N_1[\al =_l M] \ras
P$).
\end{enumerate}
\end{lemma}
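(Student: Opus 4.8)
\medskip
\noindent\textbf{Proof idea.}

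The plan is to prove (1) and (2) simultaneously by induction on the length of the reduction $(M\;N)\ras \l x P$, respectively $(M\;N)\ras\m\al P$. An application is never (a variant of) a $\lambda$- or a $\m$-abstraction, so this reduction is non-empty; write it as $(M\;N)\tr Q\ras\l x P$ (resp.\ $(M\;N)\tr Q\ras\m\al P$) and argue by cases on the first step. That step is either internal to $M$, internal to $N$, or the contraction of a head redex, and a head redex of $(M\;N)$ is necessarily a $\be$-redex (when $M=\l y M_1$), a $\m$-redex (when $M=\m\al M_1$), or a $\m'$-redex (when $N=\m\al N_1$).

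Two elementary facts are used throughout. First, a term of shape $\l x R$ (resp.\ $\m\al R$) can only reduce to a term of the same shape, with the body reducing accordingly, since $\l x R$ and $\m\al R$ are not themselves redexes, so every step occurs inside $R$. Second, one-step reduction commutes with each of the relevant substitutions: if $N\tr N'$ then $R[x:=N]\ras R[x:=N']$ and $R[\al=_r N]\ras R[\al=_r N']$ (reduce each inserted copy of $N$), and if $M\tr M'$ then $R[\al=_l M]\ras R[\al=_l M']$; iterating, the same holds with $\ras$ in place of $\tr$.

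For (1), if the first step is internal to $M$ ($M\tr M'$), apply the induction hypothesis to $(M'\;N)\ras\l x P$ and prepend the step; if it is internal to $N$ ($N\tr N'$), apply it to $(M\;N')\ras\l x P$ and replace $M_1[y:=N']$ by $M_1[y:=N]$ via the commutation fact. If the first step contracts a head redex it must be a $\be$-redex, $M=\l y M_1$ and $Q=M_1[y:=N]$, so we conclude with $M\ras\l y M_1$ in zero steps; the options $M=\m\al M_1$ and $N=\m\al N_1$ are impossible because $Q$ would then be a $\m$-abstraction, which can only reduce to a $\m$-abstraction, contradicting $Q\ras\l x P$. Part (2) runs the same way: in the internal cases one carries whichever of the three conclusions the induction hypothesis yields through the extra step, using the commutation fact for $[x:=N]$, $[\al=_r N]$ or $[\al=_l M]$ as appropriate; a head redex lands directly in one of the three alternatives --- a $\be$-redex in the first, a $\m$-redex $M=\m\al M_1$ in the second (here $Q=\m\al(M_1[\al=_r N])\ras\m\al P$ forces $M_1[\al=_r N]\ras P$ by the first elementary fact), and a $\m'$-redex $N=\m\al N_1$ in the third, symmetrically.

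The only delicate point is this last appeal to the first elementary fact in the $\m$- and $\m'$-cases of (2): one must do the small amount of $\alpha$-conversion bookkeeping needed so that the bound $\m$-variable of the target $\m\al P$ is taken to be exactly the one created by the head $\m$- or $\m'$-step. Beyond organizing this case analysis I do not expect any real obstacle.
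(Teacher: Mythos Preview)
Your argument is correct and is exactly the natural induction on the length of the reduction that the paper's ``Easy'' presumably has in mind; the case analysis (internal to $M$, internal to $N$, head $\beta/\mu/\mu'$) together with the two elementary facts you isolate suffices, and your handling of the cross-cases (e.g.\ internal step in $M$ landing in the third alternative of (2), where $N_1[\alpha=_l M]\ras N_1[\alpha=_l M']$ is needed) is right.
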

\begin{proof}
Easy.
\end{proof}

\begin{lemma}\label{l30}
Assume $M,N \in SN$ and $(M \; N) \not\in SN$. Then, either ($M
\ras \la y P$ and $P[y:=N] \not\in SN$) or
  ($M \ras \mu \al P$ and $P[\al =_r N] \not\in SN$) or
 ($N \ras \mu \al P$ and $P[\al =_l M] \not\in SN$).
\end{lemma}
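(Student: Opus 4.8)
The plan is to follow an infinite reduction issued from $(M\;N)$ and observe that, since $M,N\in SN$, it cannot consist only of steps performed inside $M$ or inside $N$: the left steps project onto a reduction of $M$ and the right steps onto a reduction of $N$, and both of these are finite, so an all-internal reduction would be finite. Hence along the infinite reduction a redex must eventually be contracted \emph{at the root} of the application. Taking the first such step, it has the form $(M_0\;N_0)\tr R$ with $M\ras M_0$, $N\ras N_0$, and the tail of the infinite sequence witnesses $R\notin SN$.

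From the grammar, $(M_0\;N_0)$ can itself be a redex only in one of three ways, giving: $M_0=\la y M_1$ and $R=M_1[y:=N_0]$; or $M_0=\m\al M_1$ and $R=\m\al\,M_1[\al=_r N_0]$; or $N_0=\m\al N_1$ and $R=\m\al\,N_1[\al=_l M_0]$. In each case I then want to transport the failure of $SN$ back from the reduct $R$ (which involves $M_0,N_0$) to the term of the statement (which involves $M,N$). For that I would use three routine facts: reduction passes through the three substitution operations in their plugged-in argument, i.e. $T\ras T'$ implies $U[z:=T]\ras U[z:=T']$, $U[\al=_r T]\ras U[\al=_r T']$ and $U[\al=_l T]\ras U[\al=_l T']$; next $\m\al\,Q\notin SN$ iff $Q\notin SN$; and finally a non-$SN$ term stays non-$SN$ along any reduct (``$T\ras T'\notin SN$ implies $T\notin SN$'').

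Applying these: in the first case $M\ras\la y M_1$, and from $N\ras N_0$ we get $M_1[y:=N]\ras M_1[y:=N_0]=R\notin SN$, hence $M_1[y:=N]\notin SN$; take $P=M_1$. The second case is identical with $[\al=_r-]$, using that $R\notin SN$ forces $M_1[\al=_r N_0]\notin SN$ and then $M_1[\al=_r N]\ras M_1[\al=_r N_0]$. In the third case $N\ras\m\al N_1$, and from $M\ras M_0$ we get $N_1[\al=_l M]\ras N_1[\al=_l M_0]\notin SN$; take $P=N_1$.

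The only point that needs genuine care is the very first one: that an infinite reduction of $(M\;N)$ cannot postpone a root contraction forever. Once one notes that ``not a root redex'' means precisely ``a redex lying strictly inside the left component or strictly inside the right component'' — which is immediate since $(M_0\;N_0)$ is a redex exactly when $M_0$ is a $\la$- or $\m$-abstraction, or $N_0$ is a $\m$-abstraction — the claim follows from the strong normalisation of $M$ and of $N$ by the projection argument above (one may make it quantitative using $\eta(M)$ and $\eta(N)$ if desired). Everything after that is bookkeeping with the three substitution operations.
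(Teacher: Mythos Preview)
Your argument is correct and is essentially an unfolded version of the paper's one-line proof ``By induction on $\eta(M)+\eta(N)$'': the bound $\eta(M)+\eta(N)$ on the number of purely internal steps is exactly what forces a root contraction, and the inductive formulation just processes those internal steps one at a time instead of jumping straight to the first root step as you do. The substitution-compatibility facts you invoke are precisely what the inductive step needs as well, so the two presentations are equivalent.
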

\begin{proof}
By induction on $\eta(M)+\eta(N)$.
\end{proof}

\begin{lemma}\label{lambda}
The term $(x \; M_1 \pts  M_n)$ never reduces to a term of the
form $\la y M$.
\end{lemma}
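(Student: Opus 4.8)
The plan is to argue by induction on $n$. The guiding intuition is that a term whose head is a $\la$-variable $x$ can only reduce either to another application with head $x$ or — when one of the arguments turns into a $\m$-abstraction and a $\m'$-redex fires at the top — to a $\m$-abstraction, and in neither case to a $\la$-abstraction. Rather than maintain this invariant by hand, I would lean directly on Lemma \ref{lammu}(1), which already records the only way an application can reduce to a term of the form $\la y P$.

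Concretely: for the base case $n = 0$ the term is just $x$, which is normal and not an abstraction, so there is nothing to prove. For the inductive step, I would write $(x \; M_1 \pts M_n) = (Q \; M_n)$ with $Q = (x \; M_1 \pts M_{n-1})$, and suppose for contradiction that $(Q \; M_n) \ras \la y M$. Lemma \ref{lammu}(1) then yields $z$ and $Q_1$ with $Q \ras \la z Q_1$, which contradicts the induction hypothesis applied to $Q$ (a term of exactly the prescribed shape with $n-1$ arguments). Hence $(x \; M_1 \pts M_n)$ never reduces to an abstraction.

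I do not expect a genuine obstacle here: the real work is packaged inside Lemma \ref{lammu}(1). The single point worth flagging — and the reason the statement is not entirely vacuous — is the $\m'$ rule: if $M_n \ras \m \al N$ then $(Q \; M_n)$ does have a top-level redex that changes the outermost constructor, but it produces a $\m$-abstraction, not a $\la$-abstraction, which is precisely why that possibility is absent from the case analysis underlying Lemma \ref{lammu}(1). An alternative, slightly longer route would be a direct induction on the length of the reduction, showing that $(x \; M_1 \pts M_n)$ only ever reduces to terms of the form $(x \; M'_1 \pts M'_k)$ or $\m \al R$ and using that a term $\m \al R$ reduces only to terms of the same shape; but the induction on $n$ via Lemma \ref{lammu}(1) is the cleaner option.
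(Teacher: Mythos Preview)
Your argument is correct and matches the paper's own proof, which simply reads ``By induction on $n$. Use lemma \ref{lammu}.'' You have filled in exactly the intended details: the inductive step applies Lemma~\ref{lammu}(1) to the outermost application to force the left factor $(x \; M_1 \pts M_{n-1})$ to reduce to a $\la$-abstraction, contradicting the induction hypothesis.
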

\begin{proof}
By induction on $n$. Use lemma \ref{lammu}.
\end{proof}

\begin{definition}
\begin{itemize}
  \item Let $M_1, ..., M_n$ be terms and $1\leq i \leq n$.
We will  denote by $M[\al=_i (M_1 \pts M_n)]$ the term $M$ in
which every sub-term of the form $(\al \; U)$ is replaced by $(\al
\; (x \; M_1 \pts M_{i-1} \; U \; M_{i+1} \pts M_n))$ .
\item We will  denote by $\Sigma_x$ the set of simultaneous substitutions of the form
$[\al_1=_{i_1}(M^1_1 \pts M^1_n),...,\al_k=_{i_k}(M^k_1 \pts
  M^k_n)]$.
\end{itemize}

\end{definition}

\begin{lemma}\label{l21}
Assume $(x \; M_1 \pts  M_n) \ras \m\al M$. Then, there is an $i$
such that $M_i \ras \m\al P$ and $P[\al=_i (M_1 \pts M_n)] \ras
M$.
\end{lemma}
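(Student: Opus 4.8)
The statement says that if $(x\;M_1\pts M_n)\ras \m\al M$, then the $\m'$-redex responsible for creating a $\m\al$-head must have come from inside one of the $M_i$. I would argue by induction on $n$, or — what amounts to almost the same thing — by induction on the length of the reduction $(x\;M_1\pts M_n)\ras \m\al M$, keeping $n$ fixed and analysing the last step. The base $n=0$ is vacuous since $x$ reduces only to $x$, which is not of the form $\m\al M$. By Lemma \ref{lambda} we already know $(x\;M_1\pts M_n)$ never reduces to an abstraction, which rules out the dangerous possibility that the head argument slot gets filled by a $\l$-abstraction that then creates a $\m'$-redex with $M_n$; this is the observation that makes the induction go through.

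For the inductive step, write the term as $((x\;M_1\pts M_{n-1})\;M_n)$ and consider a reduction to $\m\al M$. Either the reduction takes place entirely inside the two immediate subterms $(x\;M_1\pts M_{n-1})$ and $M_n$ — in which case, since by Lemma \ref{lambda} the left subterm can never become an abstraction and (looking at its outermost shape) can never become a $\m$-term either unless some $M_j$ $(j\le n-1)$ already did so, we peel off one application and apply the induction hypothesis to $(x\;M_1\pts M_{n-1})$; or at some point the whole application becomes a redex. By Lemma \ref{lammu}(2) applied to $M':=(x\;M_1\pts M_{n-1})$ and $N:=M_n$, the only way $((x\;M_1\pts M_{n-1})\;M_n)\ras\m\al M$ through contracting an outer redex is: the left part reduces to $\l y M_1'$ (impossible by Lemma \ref{lambda}); or the left part reduces to $\m\al M_1'$ with $M_1'[\al=_r M_n]\ras M$ (and then by induction the left part becoming $\m\al M_1'$ forces some $M_j\ras\m\al P_j$ with $P_j[\al=_j(M_1\pts M_{n-1})]\ras M_1'$, and one checks that substituting $=_r M_n$ afterwards merges into the single substitution $P_j[\al=_j(M_1\pts M_n)]$); or $M_n\ras\m\al N_1$ with $N_1[\al=_l(x\;M_1\pts M_{n-1})]\ras M$, which is exactly the conclusion with $i=n$ once one recognises that $[\al=_l(x\;M_1\pts M_{n-1})]$ is the instance $[\al=_n(M_1\pts M_n)]$ of the general substitution (here the empty slot is the last one).

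The bookkeeping I expect to be the main obstacle is the fusion of substitutions in the middle case: one has an intermediate term $\m\al M_1'$ obtained from $M_j$ via $[\al=_j(M_1\pts M_{n-1})]$, and then a further $\m$-step substitutes $M_n$ into every $(\al\;U)$; I must verify that the composite effect on $M_j$ is precisely $[\al=_j(M_1\pts M_n)]$, i.e. that $(x\;M_1\pts M_{j-1}\;U\;M_{j+1}\pts M_{n-1})$ followed by appending $M_n$ equals $(x\;M_1\pts M_{j-1}\;U\;M_{j+1}\pts M_{n-1}\;M_n)$, and that this commutes with the residual reduction. This is routine but must be stated carefully; the use of $\Sigma_x$ in the preceding definition is presumably there precisely to package this kind of argument, so in the write-up I would phrase the induction hypothesis directly in terms of membership in $\Sigma_x$-substituted reducts rather than re-deriving the slot arithmetic each time. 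The other steps — the structural case analysis and the appeals to Lemmas \ref{lammu} and \ref{lambda} — are mechanical.
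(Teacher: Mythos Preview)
Your proposal is correct and follows exactly the route the paper indicates: induction on $n$, with Lemma~\ref{lammu}(2) providing the three-way case split on how $((x\;M_1\pts M_{n-1})\;M_n)$ can reduce to a $\mu$-term, and Lemma~\ref{lambda} eliminating the $\lambda$-case. The only part the paper leaves implicit and you spell out is the substitution fusion $P_j[\al=_j(M_1\pts M_{n-1})][\al=_r M_n]=P_j[\al=_j(M_1\pts M_n)]$ in the middle case, which is indeed the routine bookkeeping you identify.
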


\begin{proof}
By induction on $n$. Use lemmas \ref{lammu} and \ref{lambda}.
\end{proof}

\begin{lemma}\label{l22}
Assume $M_1,...,M_n \in SN$ and $(x \; M_1 \pts M_n) \not\in SN$.
Then,  there is an $1\leq i \leq n$ such that $M_i \ras \mu \al
\;U$ and $U[\al=_i (M_1 \pts M_n)] \not\in SN$.
\end{lemma}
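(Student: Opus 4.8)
The plan is to prove Lemma \ref{l22} by induction on $\eta(M_1)+\dots+\eta(M_n)$, the sum of the lengths of the longest reductions of the $M_i$. Since $(x\;M_1\pts M_n)\notin SN$, there is an infinite reduction starting from this term. A reduction from a term headed by the variable $x$ can, at each step, either reduce inside one of the arguments $M_i$, or fire a $\m$ or $\m'$ redex that involves the head variable $x$ — but note that no $\be$ redex and no $\m$ redex can ever have the head $x$ as its left part, and by Lemma \ref{lambda} the arguments never turn into $\la$-abstractions, so the only ``non-internal'' steps available are $\m$ or $\m'$ interactions where one of the $M_i$ has reduced to a term of the form $\m\al\,U$.

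First I would distinguish two cases according to the shape of the infinite reduction. Case one: the infinite reduction consists entirely of steps internal to the arguments, i.e. $(x\;M_1\pts M_n)\ras(x\;M_1'\pts M_n')$ with $M_j\ras M_j'$ for all $j$ and the reduction never contracts a redex consuming $x$. Then some fixed $M_i$ must itself have an infinite reduction, contradicting $M_i\in SN$; so this case cannot actually occur, and the infinite reduction must eventually perform a step that is not purely internal. Case two: the infinite reduction, after some finite prefix of internal steps $(x\;M_1\pts M_n)\ras(x\;N_1\pts N_n)$ with $N_j\in SN$, contracts a redex involving $x$. By the analysis above and Lemma \ref{l21}, such a step is only possible when some $N_i\ras\m\al\,P$; firing it replaces the head by a term $Q$ with $(x\;N_1\pts N_n)\ras Q$ and, by (the reasoning behind) Lemma \ref{l21}, $Q\peq P[\al=_i(N_1\pts N_n)]$, and this $Q$ is still not in $SN$.

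The crux is to push this back to the original $M_i$. Since $M_i\ras N_i\ras\m\al\,P$, we may set $U\equiv P$, so $M_i\ras\m\al\,U$; it remains to see that $U[\al=_i(M_1\pts M_n)]\notin SN$. We know $U[\al=_i(N_1\pts N_n)]$ (which is $\peq Q$) is not in $SN$. Since each $N_j$ is obtained from $M_j$ by reduction, $U[\al=_i(M_1\pts M_n)]\ras U[\al=_i(N_1\pts N_n)]$ (substitution commutes with reduction of the substituted terms — each occurrence of $(\al\;W)$ becomes $(\al\;(x\;M_1\pts W\pts M_n))$, which reduces to $(\al\;(x\;N_1\pts W\pts N_n))$), and hence $U[\al=_i(M_1\pts M_n)]\notin SN$ as well. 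This gives exactly the conclusion.

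The main obstacle I anticipate is making rigorous the claim that the \emph{first} non-internal step in an infinite reduction from $(x\;M_1\pts M_n)$ necessarily arises from some argument reducing to a $\m$-abstraction and is captured by Lemma \ref{l21} — one has to argue carefully that after any finite sequence of steps the term is still of the form $(x\;\dots)$ applied to arguments (using Lemma \ref{lambda} to rule out the head argument becoming a $\la$), so that the head $x$ is never consumed by a $\be$ or $\m$ step, and that a $\m'$ step consuming $x$ forces the relevant argument into the form $\m\al\,U$; Lemmas \ref{lammu}, \ref{lambda} and \ref{l21} are precisely the tools for this, and the $SN$ hypothesis on the $M_i$ is what forces the infinite reduction to leave the ``internal'' regime in finitely many steps. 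The commutation of $[\al=_i(\cdot)]$ with reduction is routine and parallels the standard fact used for $[\al=_r N]$.
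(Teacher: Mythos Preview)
Your plan has a genuine gap in Case two. Suppose the first non-internal step in the infinite reduction fires the $\m'$-redex at position $j$, i.e.\ $N_j=\m\al P$ and
\[
(x\;N_1\pts N_n)\;=\;\big((x\;N_1\pts N_{j-1})\;\m\al P\;N_{j+1}\pts N_n\big)\;\tr\;\big(\m\al\,P[\al=_j(N_1\pts N_j)]\;\;N_{j+1}\pts N_n\big)=:Q.
\]
When $j<n$ the term $Q$ still carries the pending arguments $N_{j+1},\dots,N_n$. You then need $P[\al=_j(N_1\pts N_n)]\notin SN$, but you only know $Q\notin SN$. It is true that $Q\tr^{*}\m\al\,P[\al=_j(N_1\pts N_n)]$ via $\m$-steps, yet in this \emph{non-confluent} calculus $Q\notin SN$ does not force a particular reduct of $Q$ to be $\notin SN$; the infinite reduction from $Q$ may instead proceed by a $\m'$-interaction with some $N_l$ ($l>j$) and never factor through $P[\al=_j(N_1\pts N_n)]$. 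Your appeal to Lemma~\ref{l21} does not rescue this: that lemma tells you what happens when $(x\;N_1\pts N_n)$ \emph{reduces to} a $\m$-abstraction, but the infinite reduction need not pass through such a shape. (Your $\preceq$ is also pointing the wrong way: one has $P[\al=_j(N_1\pts N_n)]\preceq Q$, not conversely.) Finally, the announced induction on $\sum\eta(M_i)$ is never actually invoked.

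The paper avoids this obstacle by a different decomposition: take the least $k$ with $(x\;M_1\pts M_{k-1})\in SN$ and $(x\;M_1\pts M_k)\notin SN$, and apply Lemma~\ref{l30} to the \emph{binary} application $\big((x\;M_1\pts M_{k-1})\;M_k\big)$. Lemma~\ref{lambda} kills the $\la$-case; in the case $(x\;M_1\pts M_{k-1})\ras\m\al P$, Lemma~\ref{l21} produces $i<k$ with $M_i\ras\m\al U$ and $U[\al=_i(M_1\pts M_{k-1})]\ras P$, whence $U[\al=_i(M_1\pts M_k)]\notin SN$; the case $M_k\ras\m\al P$ gives $i=k$ directly. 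One then passes from $U[\al=_i(M_1\pts M_k)]\notin SN$ to $U[\al=_i(M_1\pts M_n)]\notin SN$ because the latter is obtained from the former by further $[\al=_r M_{k+1}]\cdots[\al=_r M_n]$, and such substitutions \emph{preserve} infinite reductions (each step of the source lifts to at least one step in the target). This last implication is the easy direction; your argument was implicitly relying on the opposite, false, direction.
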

\begin{proof}
Let $k$ be the least such that $(x \; M_1 \pts M_{k-1}) \in SN$
and $(x \; M_1 \pts M_k)$  $ \not\in SN$. Use lemmas \ref{l30},
\ref{lambda} and \ref{l21}.
\end{proof}

\begin{lemma}\label{l7a}
 Let $M$ be a term and $\sigma \in \Sigma_x$. If $M[\sigma] \ras \m\al
 P$ (resp. $M[\sigma] \ras \l x P$) , then $M\ras \m\al Q$
 (resp. $M\ras \l x Q$) for some $Q$ such that $Q[\sigma] \ras P$.
\end{lemma}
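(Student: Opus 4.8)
The statement is a ``projection/lifting'' lemma: a reduction performed after a $\Sigma_x$-substitution can already be witnessed, at the head, before the substitution. The natural approach is induction on the length of the reduction $M[\sigma] \ras \m\al P$ (resp. $M[\sigma] \ras \l x P$). The base case ($M[\sigma] = \m\al P$, resp. $\l x P$) forces $M$ itself to be of that shape: a substitution $\sigma \in \Sigma_x$ only rewrites subterms $(\al_j \; U)$ into $(\al_j \; (x \; \dots U \dots))$, so it cannot create a top-level $\m$ or $\l$ that was not already there; thus $M = \m\al Q$ (resp. $\l x Q$) with $Q[\sigma] = P$, and we take the empty reduction.

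For the inductive step, write $M[\sigma] \tr M_1 \ras \m\al P$ and analyse the single step $M[\sigma] \tr M_1$. The key observation is that every redex of $M[\sigma]$ either comes from a redex already present in $M$, or is ``frozen'' inside one of the inserted blocks $(x \; M_1 \pts U \pts M_n)$ — but such a block has a variable $x$ in head position, hence is not itself a $\be$-, $\m$-, or $\m'$-redex, and the inserted material does not interact with the surrounding term in a way that changes the head constructor. So the step $M[\sigma] \tr M_1$ is the $\sigma$-image of a step $M \tr M'$ for some $M'$ with $M'[\sigma] = M_1$ (using that $\m$- and $\m'$-reduction commute with the $[\al=_i(\dots)]$-style substitutions, which is the same bookkeeping as in Lemma~\ref{l7a}'s companions). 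Then $M'[\sigma] \ras \m\al P$ is a shorter reduction, the induction hypothesis gives $M' \ras \m\al Q$ with $Q[\sigma] \ras P$, and prepending $M \tr M'$ yields $M \ras \m\al Q$ as required; the $\l x$ case is identical.

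One subtlety deserves care: the substitution $\sigma$ is \emph{simultaneous} and each component $\al_j =_{i_j} (M^j_1 \pts M^j_n)$ inserts a term with free occurrences of $\m$-variables, so when the ambient reduction is a $\m$- or $\m'$-step that captures or duplicates an $(\al_j \; U)$, one must check that the inserted context is carried along correctly and that no new head redex is manufactured at the junction. Concretely, if $M \tr M'$ by firing $(\m \be R \; S) \tr \m \be R[\be =_r S]$, one checks $M[\sigma] \tr (\m \be R[\be =_r S])[\sigma]$ and that this equals $M'[\sigma]$ — i.e. that $[\sigma]$ and $[\be =_r S]$ commute appropriately on the relevant subterms — which holds because $\be$ is distinct from every $\al_j$ and the blocks inserted by $\sigma$ contain no free occurrence of $\be$ (or, if they do, standard $\alpha$-renaming removes the clash). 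This commutation bookkeeping is the main obstacle; once it is in place the induction runs smoothly.

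\medskip
\noindent (Remark: this lemma is the $\Sigma_x$-analogue of Lemma~\ref{lammu}, and together with Lemma~\ref{l22} it will feed the proof that $(x \; M_1 \pts M_n) \in SN$ whenever $M_1, \dots, M_n \in SN$, by an induction on $\sum \eta(M_i)$ in which a putative infinite reduction of $(x \; M_1 \pts M_n)$ is pushed, via Lemma~\ref{l22}, into an infinite reduction of some $U[\al=_i(M_1 \pts M_n)]$ with $M_i \ras \m\al U$, and then Lemma~\ref{l7a} together with the present lemma let one extract from it an infinite reduction of a strictly smaller configuration.)
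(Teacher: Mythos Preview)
Your induction on the length of the reduction is a natural first move, but the ``key observation'' --- that the single step $M[\sigma]\tr M_1$ is always the $\sigma$-image of a step $M\tr M'$ with $M'[\sigma]=M_1$ --- is false, and this is where the argument breaks. Two things go wrong. First, the inserted terms $M^j_k$ may themselves contain redexes; if $\al_j$ occurs more than once in $M$ and you fire a redex inside \emph{one} copy of $M^j_k$, the resulting term carries different material at the different $\al_j$-occurrences and is therefore not of the form $M'[\sigma']$ for any $M'$ reachable from $M$ and any $\sigma'\in\Sigma_x$. Second, $\sigma$ manufactures genuinely new $\m'$-redexes at the junction: from $(\al_j\;U)$ with $U=\m\gamma V$ one obtains $(\al_j\;(x\;M^j_1\pts M^j_{i-1}\;U[\sigma]\;M^j_{i+1}\pts M^j_n))$, and the subterm $((x\;M^j_1\pts M^j_{i-1})\;U[\sigma])$ is a $\m'$-redex with no counterpart in $M$ (in $M$ the corresponding node is the constructor $(\al_j\;U)$, not an application). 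Firing that redex again produces a term that is not in the image of any $\Sigma_x$-substitution on any reduct of $M$. So the step-by-step lifting you rely on does not hold, and the induction hypothesis cannot be applied to $M_1$.

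The paper proceeds differently: by induction on $M$. In the only non-trivial case $M=(N_1\;N_2)$ one has $M[\sigma]=(N_1[\sigma]\;N_2[\sigma])$ and Lemma~\ref{lammu} decomposes the whole reduction $M[\sigma]\ras\m\al P$ into a head reduction of $N_1[\sigma]$ or $N_2[\sigma]$ followed by a residual; the induction hypothesis is then applied to the strict subterm $N_1$ (or $N_2$), never to an arbitrary one-step reduct of $M[\sigma]$. This sidesteps the lifting problem entirely. If you wish to keep an induction on the reduction length, the clean fix is to strengthen the statement so that $\sigma$ may be heterogeneous (different tuples at different occurrences of the same $\al_j$), which restores closure under the problematic steps; otherwise, switch to the structural induction on $M$ and use Lemma~\ref{lammu} as the paper does.
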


\begin{proof}
By induction on $M$.
\end{proof}

The next lemma is the key of the proof of theorem \ref{xt1}.
Though intuitively clear (if the cause of non $SN$ is the
substitution $\delta=_i(P_1 ... P_n)$, this must come from some
$(\delta \; M') \prec M$) its proof is rather technical.

\begin{lemma}\label{crux''}

Let $M$ be a term and $\sigma \in \Sigma_x $. Assume $\delta$ is
free in $M$ but not free in $Im(\sig)$. If $M[\sigma] \in SN$ but
$M[\sigma][\delta=_i(P_1 ... P_n)] \not\in
  SN$, there is $M'\prec M$ and $\sigma'$ such that
 $M'[\sigma'] \in SN$ and $(x \;
 P_1 ... P_{i-1} \; M'[\sigma']\; P_{i+1} ...P_n)\not\in
  SN$.
\end{lemma}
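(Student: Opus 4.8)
The plan is to argue by induction on the pair $(\eta(M[\sigma]), cxty(M))$ ordered lexicographically, and to do a case analysis on the structure of $M$. The free $\mu$-variable $\delta$ must actually occur in $M$ applied to something — a sub-term of the form $(\delta\; U)$ — since substituting $[\delta=_i(P_1\ldots P_n)]$ only changes those sub-terms, and it is precisely the enlarged arguments appearing there that can destroy strong normalization. So the skeleton is: track where $\delta$ appears, descend into $M$ keeping $\sigma$ (possibly enlarged to a new $\sigma'\in\Sigma_x$) under control, and at the base case exhibit the offending term $(x\; P_1\ldots P_{i-1}\;M'[\sigma']\;P_{i+1}\ldots P_n)$.

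First I would dispatch the trivial structural cases. If $M$ is a variable distinct from a $(\delta\;-)$ context, or $M=\l y\, M_0$, or $M=\mu\beta\, M_0$ with $\beta\neq\delta$, or $M=(M_0\;M_1)$, then $\delta=_i(\cdots)$ commutes with the head constructor and we push the hypothesis down to the immediate sub-term(s): at least one sub-term $M_j$ has $M_j[\sigma]\in SN$ but $M_j[\sigma][\delta=_i(P_1\ldots P_n)]\notin SN$, and since $cxty(M_j)<cxty(M)$ while $\eta$ does not increase, the induction hypothesis applies and returns the desired $M'\prec M_j\peq M$ and $\sigma'$. The only genuinely new base case is $M=(\delta\; U)$ (up to the outer applications, which the application case already strips). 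Here $M[\sigma][\delta=_i(\cdots)] = (\delta\; (x\; P_1\ldots P_{i-1}\; U[\sigma]\; P_{i+1}\ldots P_n)\cdots)$ — wait, one must be careful: the replacement acts on $U$'s own $(\delta\;-)$ sub-terms too, so a cleaner phrasing is to first handle the case where $\delta$ does not occur in $U$, giving $M' = U$ and $\sigma' = \sigma$ directly, since then $M[\sigma][\delta=_i(\cdots)]$ is literally $(\delta\;(x\;P_1\ldots U[\sigma]\ldots P_n))$ and the failure of $SN$ must lie in the bracketed term (the outer $(\delta\;-)$ applied to nothing more cannot diverge on its own, using Lemma~\ref{l30} and the fact that a head $\delta$-application behaves like the head-variable case of Lemma~\ref{l22}).

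The main obstacle is the \emph{nested} case: when $\delta$ occurs inside $U$ as well, the term $U[\sigma][\delta=_i(\cdots)]$ is not a sub-term of $M[\sigma][\delta=_i(\cdots)]$ in a naive sense, because the outer replacement has wrapped it. The trick, which is why the paper calls the proof ``rather technical'', is to absorb the outer wrapping into the substitution: observe that replacing $(\delta\;V)$ by $(\delta\;(x\;P_1\ldots V\ldots P_n))$ throughout is exactly applying $[\delta=_i(P_1\ldots P_n)]$, and one wants to recurse on $U$ with an enlarged substitution. Concretely I would set up an auxiliary claim handling a term of the form $(\delta\;U)$ by induction on $\eta$: run the reduction of $(\delta\;U)[\sigma][\delta=_i(\cdots)]$; either it eventually produces an infinite reduction confined to the argument $(x\;P_1\ldots U[\sigma][\delta=_i(\cdots)]\ldots P_n)$, in which case by Lemma~\ref{l22} some $P_j$ or $U[\sigma][\delta=_i(\cdots)]$ itself is non-$SN$ — if it is a $P_j$ we are already in a contradiction with $M[\sigma]\in SN$ being the \emph{only} failure source, and if it is $U[\sigma][\delta=_i(\cdots)]$ we recurse on $U$ (strictly smaller $cxty$); or the reduction first does something at the $\delta$-node combining with an outer context, but by Lemma~\ref{l21} / Lemma~\ref{l7a} any such step is matched by a reduction of $M$ before substitution, contradicting $M[\sigma]\in SN$ unless it is purely inside $U[\sigma]$, again handled by the induction hypothesis via $U\prec M$.

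Putting it together: in every branch we either derive an outright contradiction with the hypotheses, or we produce $M'\preceq U \prec M$ (using that $(\delta\;U)\ras(\delta\;U)$ trivially and $U<(\delta\;U)$, so $U\prec M$ by the definition of $\prec$) together with a substitution $\sigma'\in\Sigma_x$ — either $\sigma$ unchanged or $\sigma$ composed with the inner $[\delta=_i(\cdots)]$ renamed appropriately — such that $M'[\sigma']\in SN$ while $(x\;P_1\ldots P_{i-1}\;M'[\sigma']\;P_{i+1}\ldots P_n)\notin SN$. Lemmas~\ref{lammu}, \ref{l30}, \ref{lambda}, \ref{l21}, \ref{l22} and especially \ref{l7a} are all invoked to ensure that reductions after substitution reflect back to reductions of the un-substituted term, which is what keeps the induction on $\eta(M[\sigma])$ legitimate and prevents the argument from looping. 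I expect the bookkeeping of which reductions are ``internal to $U$'' versus ``at the $\delta$-node'' to be where all the technical friction lives.
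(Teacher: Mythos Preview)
The paper does not actually prove this lemma here --- it defers entirely to \cite{tlca} --- so there is no in-paper argument to compare against; I can only assess whether your plan would go through. The overall architecture (induction on $(\eta(M[\sigma]),cxty(M))$, structural case analysis, reflection of reductions through the substitution via Lemma~\ref{l7a}) is the right shape, but you have misplaced the difficulty.

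The case $M=(M_0\;M_1)$ is \emph{not} a trivial push-down. You assert that if $(M_0[\sigma][\delta=_i(P_1\ldots P_n)]\;\;M_1[\sigma][\delta=_i(P_1\ldots P_n)])\notin SN$ then some component $M_j[\sigma][\delta=_i(P_1\ldots P_n)]\notin SN$; but an application can diverge while both components are in $SN$ (witness $(\Delta\;\Delta)$), so this inference is simply false. The machinery you reserve for the ``nested $(\delta\;U)$'' situation is exactly what is needed here: when both components are in $SN$, Lemma~\ref{l30} gives, say, $M_0[\sigma][\delta=_i(P_1\ldots P_n)]\ras\lambda y\,R$ with $R[y{:=}M_1[\sigma][\delta=_i(P_1\ldots P_n)]]\notin SN$; Lemma~\ref{l7a}, applied with the enlarged substitution $\sigma\cup[\delta=_i(P_1\ldots P_n)]\in\Sigma_x$, reflects this to $M_0\ras\lambda y\,Q_0$, whence $M\tr^{+}Q_0[y{:=}M_1]$ and $\eta(Q_0[y{:=}M_1][\sigma])<\eta(M[\sigma])$, so the induction hypothesis fires on the \emph{first} coordinate of the measure. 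The $\mu$ and $\mu'$ branches of Lemma~\ref{l30} are handled analogously. A related subtlety hits $M=(\alpha\;M_0)$ when $\alpha$ lies in the domain of $\sigma$: after $\sigma$ this becomes $(\alpha\;(x\;N_1\ldots M_0[\sigma]\ldots N_m))$, and one needs Lemma~\ref{l22} rather than a direct descent into $M_0$. In short, the $(\delta\;U)$ base case you single out is comparatively tame; the real technical friction --- and the reason the $\eta$-component of the induction measure is present at all --- lives in the ordinary application case that you dismissed as structural.
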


\begin{proof}
See \cite{tlca} for more detail.
\end{proof}

\begin{theorem}\label{xt1}
Assume $M_1,..., M_n$ are in $SN$. Then $(x \; M_1 \pts M_n) \in
SN$.
\end{theorem}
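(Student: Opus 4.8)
The plan is to argue by contradiction: suppose $(x \; M_1 \pts M_n) \notin SN$ while all $M_i \in SN$. We should induct on some well-chosen ordinal/natural measure. The obvious candidate is $\sum_i \eta(M_i)$, the sum of the lengths of the longest reductions of the $M_i$. So I would assume the theorem holds for all tuples whose measure is strictly smaller, and derive a contradiction for the given tuple.

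First I would apply Lemma \ref{l22}: since $M_1,\dots,M_n \in SN$ but $(x \; M_1 \pts M_n) \notin SN$, there is an index $i$ with $M_i \ras \mu\al\,U$ and $U[\al=_i(M_1\pts M_n)] \notin SN$. Without loss of generality $\al$ is chosen fresh, so in particular $\al$ does not occur free in $M_1,\dots,M_{i-1},M_{i+1},\dots,M_n$. Now $U \in SN$ (it is a subterm of a reduct of $M_i \in SN$), and $U = U[\,]$ with the empty substitution $\sigma$, which lies in $\Sigma_x$ vacuously; moreover $U[\sigma] = U \in SN$ while $U[\sigma][\al=_i(M_1\pts M_n)] = U[\al=_i(M_1\pts M_n)] \notin SN$. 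This is exactly the hypothesis of Lemma \ref{crux''} (with $M := U$, $\delta := \al$, $P_j := M_j$). Applying it yields a term $M' \prec U$ and a substitution $\sigma' \in \Sigma_x$ such that $M'[\sigma'] \in SN$ and
$$(x \; M_1 \pts M_{i-1}\; M'[\sigma']\; M_{i+1} \pts M_n) \notin SN.$$

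Now I would compare measures. Because $M' \prec U$ and $U \leq M_i'$ for some reduct $M_i'$ of $M_i$ (via $M_i \ras \mu\al U$), and moreover the relation $\prec$ records that $M'$ is either a strict subterm or a subterm of a proper reduct, the term $M'[\sigma']$ — together with the other arguments $M_j$ — should have strictly smaller total $\eta$-measure than the original tuple: intuitively $\eta(M'[\sigma']) < \eta(M_i)$ since passing from $M_i$ to $M'$ consumed at least one reduction step or stripped at least one constructor, and $\sigma'$ is built from strict subterms/reducts as well. The other coordinates $M_j$ ($j \neq i$) are unchanged. Hence the induction hypothesis applies to the new tuple and tells us $(x \; M_1 \pts M_{i-1}\; M'[\sigma']\; M_{i+1}\pts M_n) \in SN$, contradicting what Lemma \ref{crux''} just gave us. This contradiction establishes the theorem.

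The main obstacle is making the measure decrease rigorous. The relation $\prec$ mixes "strict subterm" with "subterm of a term reached by at least one reduction step", and one must check that in either case the relevant $\eta$-value strictly drops — including after applying the substitution $\sigma'$, whose images are themselves controlled by $\prec$ relative to the original data. Getting a single well-founded measure that simultaneously dominates $M'[\sigma']$ and is compatible with the recursion in Lemma \ref{crux''} is the delicate bookkeeping; the cleanest route is probably to prove by a separate induction that $N \prec M$ and $M \in SN$ imply $\eta(N) < \eta(M)$ (or $\leq$ with the strict drop coming from the extra reduction step), and to verify that substitution by elements of $\Sigma_x$ built from subterms of $SN$ terms preserves this. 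Once that lemma is in hand, the argument above closes immediately.
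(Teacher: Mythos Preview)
Your overall plan---contradiction via Lemma~\ref{l22} followed by Lemma~\ref{crux''}---is the right skeleton, but the induction you set up does not close, and the fix is exactly the generalization the paper makes that you have skipped.

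The gap is the step where you claim $\eta(M'[\sigma']) < \eta(M_i)$. You correctly observe that $M' \prec U \preceq M_i$, so $(\eta(M'),cxty(M'))$ is lexicographically below $(\eta(M_i),cxty(M_i))$. But the term you must feed back into the induction hypothesis is not $M'$; it is $M'[\sigma']$. The substitution $\sigma'$ produced by Lemma~\ref{crux''} lies in $\Sigma_x$ and its images are built from the $P_j = M_j$ themselves: each $(\delta\;V)$ inside $M'$ becomes $(\delta\;(x\;M_1\ldots V\ldots M_n))$. If $M'$ contains many such named subterms, $M'[\sigma']$ contains many fresh copies of the $M_j$, and $\eta(M'[\sigma'])$ can be arbitrarily larger than $\eta(M_i)$. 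So the measure $\sum_j \eta(\text{arguments})$ does \emph{not} drop, and your induction hypothesis is unavailable. The ``separate lemma'' you sketch at the end---that substitution by elements of $\Sigma_x$ preserves the bound---is simply false for this reason.

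The paper's remedy is to strengthen the statement before inducting: prove that if $M_1[\sigma_1],\dots,M_n[\sigma_n]\in SN$ for $\sigma_j\in\Sigma_x$, then $(x\;M_1[\sigma_1]\cdots M_n[\sigma_n])\in SN$, with the induction on $(\sum\eta(M_j),\sum cxty(M_j))$ computed on the \emph{skeletons} $M_j$, not on the full terms $M_j[\sigma_j]$. After Lemma~\ref{l22} one uses Lemma~\ref{l7a} to pull the $\mu$ back through $\sigma_i$, obtaining $M_i\ras\mu\alpha Q$ with $Q[\sigma_i]\ras U$; Lemma~\ref{crux''} then yields $M'\prec Q\preceq M_i$ and some $\sigma'$, and one re-enters the induction with the $i$-th skeleton replaced by $M'$ and the $i$-th substitution replaced by $\sigma'$. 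Now only the skeleton changes, and $M'\prec M_i$ gives the strict drop in $(\eta,cxty)$ regardless of how large $\sigma'$ is. This decoupling of skeleton from substitution is the missing idea in your attempt; without it the descent cannot be made to work. (Note also that $\sum\eta$ alone is insufficient even in the paper's setup: when $M'$ is a strict subterm of $M_i$ with no reduction performed, $\eta$ may stay equal and one needs $cxty$ as a secondary component.)
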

\begin{proof}
We prove a more general result. Let  $M_1,..., M_n$ be terms and
$\sig_1,..., \sig_n$ be in $\Sigma_x$.  If $M_1[\sig_1], \ptv
M_n[\sig_n] \in SN$, then $(x \; M_1[\sig_1] \pts M_n[\sig_n]) \in
SN$. This is done by induction on $(\Sigma  \eta(M_i), \Sigma
cxty(M_i))$. Assume $(x \; M_1[\sig_1]$ ... $ M_n[\sig_n])\not \in
SN$. By lemma \ref{l22}, there is an $i$ such that $M_i[\sig_i]
\ras \mu \al \;U$ and $U[\al=_i (M_1 [\sig_1] \pts M_n[\sig_n])]
\not\in SN$. By lemma \ref{l7a}, $M_i\ras \m\al Q$
 for some $Q$ such that $Q[\sigma_i] \ras U$.
  Thus $Q[\sigma_i][\al=_i (M_1 [\sig_1] \pts M_n[\sig_n])] \not\in
 SN$.
By lemma  \ref{crux''}, let  $M'\prec Q \peq M_i$ and $\sigma'$ be
such that
 $M'[\sigma'] \in SN$ and $(x \;
 M_1 [\sig_1] ... M_{i-1} [\sig_{i-1}]$   $M'[\sigma']\; M_{i+1} [\sig_{i+1}] ...M_n[\sig_n])\not\in
  SN$. This contradicts the induction hypothesis since $(\eta(M'),
cxty(M')) < (\eta(M_i), cxty(M_i))$.
\end{proof}

\section{The counter-examples} \label{ce}

\begin{definition}
 Let $U$ and $V$ be terms.
\begin{itemize}
  \item $U \hookrightarrow V$ means that each reduction of $U$ which is long enough must go
through $V$, i.e. there is  some $ n_0 $ such that, for all $n
>n_0$, if  $U =U_0 \tr U_1 \tr ... \tr U_n$ then $ U_p =
V$ for some $p$.
  \item $U \curvearrowright V$ means that $U$ has only one redex and
$U \ra V$.
\end{itemize}

\end{definition}

\noindent {\bf Remark}

It is easy to check that if $U \hookrightarrow V$ (resp. $U
\curvearrowright V$) and $V \in SN$, then $U \in SN$.

\begin{definition}
\begin{itemize}

\item Let $M_0=\la x (x \; P \; \underline{0})$ and $M_1=\la x (x \;
P \;
 \underline{1})$ where $\underline{0} = \la x \la y y$, $\underline{1} = \la x \la
 y x$, $P=\la x \la y \la z \;(y \; (z \;\underline{1} \; \underline{0}) \; (z \;
 \underline{0} \; \underline{1}) \;
\la d \underline{1}   \; \Delta  \;  \Delta)$ and $\Delta = \la x
(x \; x)$.

\item Let $M = \langle (x \; M_1) , (x \; M_0) \rangle$, $M' =
\langle (\be \; \la x (x \; M_1)), (\be \; \la x (x \; M_0))
\rangle$ where $\langle T_1,T_0 \rangle$ denotes the pair of
terms, i.e. the term $\la f (f \; T_1 \; T_0)$ where $f$ is a
fresh variable.
\item Let $N= (\al \; \la z(\al \;z))$.

\end{itemize}
\end{definition}

\begin{lemma}
\begin{enumerate}
 \item $(M_1 \; M_0), (M_0 \; M_1) \not\in SN$.
 \item $(M_0 \; M_0), (M_1 \; M_1) \in SN$.
\end{enumerate}
\end{lemma}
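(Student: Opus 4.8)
The plan is to establish both items by an explicit computation of the reduction of $(M_i \; M_j)$ for $i,j \in \{0,1\}$, using that $P$, $\underline{0}$, $\underline{1}$ and each $M_i$ are closed normal forms, so that the first reduction steps are forced.

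First I would compute the common part of the reduction. Contracting the head redexes of $(M_i \; M_j)$ one obtains in turn $(M_j \; P \; \underline{i})$, then $(P \; P \; \underline{j} \; \underline{i})$, and then, after three further steps inside $P$,
$$(M_i \; M_j) \ras (\underline{j} \; (\underline{i} \; \underline{1} \; \underline{0}) \; (\underline{i} \; \underline{0} \; \underline{1}) \; \la d \underline{1} \; \Delta \; \Delta).$$
Reducing the two inner boolean tests, namely $(\underline{i} \; \underline{1} \; \underline{0}) \ras \underline{1}$ and $(\underline{i} \; \underline{0} \; \underline{1}) \ras \underline{0}$ if $i = 1$, and the other way round if $i = 0$, and then contracting the outer projection $\underline{j}$, the term reduces to $(\underline{1} \; \la d \underline{1} \; \Delta \; \Delta)$ if $i = j$ and to $(\underline{0} \; \la d \underline{1} \; \Delta \; \Delta)$ if $i \neq j$. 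The coding is designed precisely so that the outer test $\underline{j}$ yields $\underline{1}$ exactly when $M_i$ and $M_j$ carry the same boolean.

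For item 1 I would continue the reduction in the case $i \neq j$: since $\underline{0} = \la x \la y \, y$ erases its first argument, $(\underline{0} \; \la d \underline{1} \; \Delta \; \Delta) \ras (\Delta \; \Delta)$, and $(\Delta \; \Delta) \tr (\Delta \; \Delta)$ indefinitely, so $(M_1 \; M_0)$ and $(M_0 \; M_1)$ are not in $SN$. For item 2, in the case $i = j$ one has $(\underline{1} \; \la d \underline{1} \; \Delta \; \Delta) \ras (\la d \underline{1} \; \Delta) \tr \underline{1}$, the projection $\underline{1} = \la x \la y \, x$ keeping $\la d \underline{1}$ and dropping the first $\Delta$, and $\la d \underline{1}$ then dropping the second; so the normal form is $\underline{1}$. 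The real content of item 2, however, is that \emph{no} reduction of $(M_i \; M_i)$ is infinite, not merely that one of them terminates. Here I would use that the term contains no $\m$-abstraction, so neither does any reduct, whence in lemma \ref{l30} only the first alternative can apply; iterating lemma \ref{l30} then lets one peel the arguments of an application one at a time, reducing the claim $(M_i \; M_i) \in SN$ to the facts that $\underline{0}, \underline{1}, P, \la d \underline{1}, \Delta, M_i$ are all in $SN$, together with a verification that the dangerous subterm $(\Delta \; \Delta)$ is never produced: the two occurrences of $\Delta$ stay as the last two arguments of the spine until they are consumed by the arity-two projection $\underline{1}$, which, using only its first argument, can never apply one $\Delta$ to the other.

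I expect this last verification to be the main obstacle, namely that in \emph{every} reduction order $(\Delta \; \Delta)$ fails to appear. It is visually clear from the reduction above, but since $(M_i \; M_i)$ is not strongly normalizing for cheap reasons, and since point (1) of the introduction fails in general, it has to be argued, either through the iterated application of lemma \ref{l30} sketched above or, equivalently, by listing the finitely many reducts of the relevant left subterms and checking that each copy of $\Delta$ is always discarded rather than placed in function position.
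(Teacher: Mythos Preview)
Your computation for item~1 matches the paper's proof exactly. For item~2, the paper takes precisely your second alternative: it introduces the notations $U \hookrightarrow V$ (``every sufficiently long reduction of $U$ passes through $V$'') and $U \curvearrowright V$ (``$U$ has a unique redex and $U \tr V$'') and then simply records
\[
(M_i \; M_i)\ \hookrightarrow\ (\underline{1}\;\la d\,\underline{1}\;\Delta\;\Delta)\ \curvearrowright\ (\la y\,\la d\,\underline{1}\;\Delta\;\Delta)\ \curvearrowright\ (\la d\,\underline{1}\;\Delta)\ \curvearrowright\ \underline{1},
\]
which is exactly your ``listing the finitely many reducts of the relevant left subterms'' packaged into a compact notation. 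So your proposal is correct and, via its second route, coincides with the paper's argument.

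Your first route through lemma~\ref{l30} is not quite as smooth as ``peeling one argument at a time'' makes it sound. To apply that lemma to $(M\;N)$ you already need $M\in SN$; hence to peel the outermost argument of, say, $(P\;P\;\underline{i}\;\underline{i})$ you must first know $(P\;P\;\underline{i})\in SN$, which in turn needs $(P\;P)\in SN$, and so on inward. This can be organised, but at each branching point one must check that \emph{every} $\la y\,R$ reachable from the current head yields a harmless $R[y:=\cdot]$, which is again the exhaustive inspection of reducts. The paper's $\hookrightarrow/\curvearrowright$ formulation is simply the cleaner way to package that same verification.
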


\begin{proof}

\begin{enumerate}
\item Assume  $i \neq j$, then
\begin{eqnarray}
(M_i \; M_j) &\tr^* & (P \; P \; \underline{j} \; \underline{i})
\nonumber \\
\, &\tr^*  &(\underline{j} \; (\underline{i} \;\underline{1} \;
\underline{0}) \; (\underline{i} \; \underline{0} \; \underline{1}) \;
\la d \underline{1}   \; \Delta  \;  \Delta) \nonumber \\
\, &\tr^*  &(\underline{0} \; \la d \underline{1}   \; \Delta  \;
\Delta)\nonumber \\
\, &\tr^*  &(\Delta \; \Delta)\nonumber
\end{eqnarray}
and thus $(M_i \; M_j) \not\in SN$.
\item It is easy to check that $(M_i \; M_i) \hookrightarrow (\underline{1}
\; \la d \underline{1}   \; \Delta  \; \Delta) \curvearrowright
(\la y \la d \underline{1} \; \Delta  \; \Delta) \curvearrowright
(\la d \underline{1} \; \Delta) \curvearrowright \underline{1}$.
\qed
\end{enumerate}
\end{proof}

\begin{proposition}
$M[x:=\m\al N] \in SN$ but $(\la x M \; \m\al N) \not\in SN$.
\end{proposition}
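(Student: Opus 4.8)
The plan is to exploit that $(\la x M\;\m\al N)$ is simultaneously a $\be$-redex and a $\m'$-redex. Contracting the $\be$-redex yields $M[x:=\m\al N]$, which I will show lies in $SN$; contracting instead the $\m'$-redex lets $\m\al N$ capture $\la x M$, and a short reduction then forces a cross-application $(M_i\;M_j)$ with $i\neq j$ to appear, which is not in $SN$ by the preceding lemma.

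\emph{First half.} Since $M=\la f\,(f\;(x\;M_1)\;(x\;M_0))$, we have $M[x:=\m\al N]=\la f\,(f\;(\m\al N\;M_1)\;(\m\al N\;M_0))$, so it suffices to prove $(\m\al N\;M_i)\in SN$ for $i=0,1$. Now $\m\al N$ is normal and $M_i$ is a subterm of $(M_i\;M_i)\in SN$ (preceding lemma), so both are in $SN$. If $(\m\al N\;M_i)\notin SN$, then, because $\m\al N$ is normal and not a $\la$-abstraction while $M_i$ reduces only to $\la$-abstractions, the only applicable clause of Lemma \ref{l30} gives $N[\al=_r M_i]\notin SN$. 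But a direct computation gives
$$N[\al=_r M_i]\;=\;(\al\;(\la z\,(\al\;(z\;M_i))\;M_i))\;\tr\;(\al\;(\al\;(M_i\;M_i))),$$
and every redex of $N[\al=_r M_i]$ either lies inside one of the two (finitely reducible) copies of $M_i$ or is the displayed $\be$-redex, whose contraction always lands in a reduct of $(\al\;(\al\;(M_i\;M_i)))$ --- and $(M_i\;M_i)\in SN$. (One may again invoke Lemma \ref{l30} to make this precise.) Hence $N[\al=_r M_i]\in SN$, a contradiction, and $M[x:=\m\al N]\in SN$.

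\emph{Second half.} Now contract the $\m'$-redex; writing $Q=\la z\,(\al\;(\la x M\;z))$,
$$(\la x M\;\m\al N)\;\tr\;\m\al\,(\al\;(\la x M\;Q))\;\tr\;\m\al\,(\al\;M[x:=Q])\;=\;\m\al\,(\al\;\la f\,(f\;(Q\;M_1)\;(Q\;M_0))).$$
Then $(Q\;M_1)\;\tr\;(\al\;(\la x M\;M_1))\;\tr\;(\al\;M[x:=M_1])\;=\;(\al\;\la f\,(f\;(M_1\;M_1)\;(M_1\;M_0)))$, and since $(M_1\;M_0)\notin SN$ by the preceding lemma, the whole reduction sequence reaches a term not in $SN$; therefore $(\la x M\;\m\al N)\notin SN$.

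\emph{Main obstacle.} The delicate point is the first half: one must rule out \emph{every} divergent reduction of $M[x:=\m\al N]$. The underlying reason it works is that $\m\al N$, applied to any argument, can only ever duplicate a single $M_i$ into the harmless $(M_i\;M_i)$ and never brings $M_1$ and $M_0$ into contact --- which is exactly what Lemma \ref{l30} together with the preceding lemma encapsulates. This is precisely the safeguard that the $\m'$-rule removes: the self-application $(\al\;\la z(\al\;z))$ inside $N$ makes $\la x M$ be applied first to $Q$ and, through $Q$, again to some $M_i$, producing $M[x:=M_i]$ and with it the forbidden $(M_i\;M_j)$, $i\neq j$.
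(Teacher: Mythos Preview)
Your argument is correct and follows the same two-halves plan as the paper. One point deserves an explicit citation: the step ``so it suffices to prove $(\mu\alpha N\;M_i)\in SN$'' is \emph{not} automatic in the symmetric calculus, because $(\mu\alpha N\;M_i)$ reduces to a $\mu$-abstraction and hence $(f\;(\mu\alpha N\;M_1)\;(\mu\alpha N\;M_0))$ acquires fresh $\mu'$-redexes; this is precisely the content of Theorem~\ref{xt1}, which the paper invokes here and which you should cite as well.

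For showing $(\mu\alpha N\;M_i)\in SN$, the paper takes a slightly lighter route than yours: it simply observes that $\mu\alpha N$ and $M_i$ are both \emph{normal}, so $(\mu\alpha N\;M_i)\curvearrowright\mu\alpha(\alpha\;(\lambda z(\alpha\;(z\;M_i))\;M_i))\curvearrowright\mu\alpha(\alpha\;(\alpha\;(M_i\;M_i)))$ with a unique redex at each step, and then $(M_i\;M_i)\hookrightarrow\underline{1}$. Your detour through Lemma~\ref{l30} reaches the same conclusion and is perfectly valid, just heavier; note in particular that your phrase ``the two (finitely reducible) copies of $M_i$'' can be sharpened to ``the two normal copies of $M_i$''. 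The second half of your proof is essentially identical to the paper's, only stopping earlier (one occurrence of $(M_1\;M_0)$ already suffices).
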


\begin{proof}
\noindent (a) Since $M[x:=\mu \al N] = \langle (\mu \al N \; M_1)
, (\mu \al N \; M_0) \rangle$, by theorem \ref{xt1}, to show that
$M[x:=\m\al N] \in SN$, it is enough to show that $(\mu \al N \;
M_i) \in SN$.

\begin{eqnarray}
(\mu \al N \; M_i) &\curvearrowright  & \mu \al (\al ( \la z (\al
\; (z \; M_i))
M_i)) \nonumber\\
\, &\curvearrowright  & \mu \al (\al \; (\al \; (M_i \; M_i)))
\nonumber\\
\, &\hookrightarrow  & \mu \al (\al \; (\al \;
\underline{1}))\nonumber
\end{eqnarray}

\noindent (b)

\begin{eqnarray}
 \; (\la x M \; \m\al N) &\tr^* & \m \al (\al \; (\la x M \; \la z(\al \; (\la x M \; z))))
 \nonumber \\
\, &\tr^*  &   \m \al (\al \; (\la x M \; \la z(\al \; \langle (z
\;
M_1) , (z \; M_0) \rangle )))\nonumber\\
\, &\tr^*  &  \m \al (\al \; \langle (\al \; \langle (M_1 \; M_1),
(M_1 \; M_0)\rangle) , (\al \; \langle (M_0 \; M_1) , (M_0 \;
M_0)\rangle)
\rangle)\nonumber\\
\, &\tr^*  &  \m \al (\al \; \langle (\al \; \langle
\underline{1}, (\Delta \; \Delta)\rangle) , (\al \; \langle
\underline{1} , (\Delta \; \Delta)\rangle) \rangle)\nonumber
\end{eqnarray}
and thus $(\la x M \; \m\al N) \not\in SN$.

\end{proof}

\begin{proposition}

$M'[\be=_r\m\al N] \in SN$ but $(\m \be M' \; \m\al N) \not\in
  SN$.

\end{proposition}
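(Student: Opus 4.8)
The plan is to mimic exactly the structure of the previous proposition, which handled the $\beta/\mu'$ pair; here we handle the $\mu/\mu'$ pair, so the roles of the substitution $[x:=\cdot]$ and of the redex $(\lambda x M\;\cdot)$ are replaced by $[\beta=_r\cdot]$ and $(\mu\beta M'\;\cdot)$ respectively. For part (a), I would first observe that $M'[\beta=_r\mu\alpha N]$ is obtained from $M'=\langle(\beta\;\lambda x(x\;M_1)),(\beta\;\lambda x(x\;M_0))\rangle$ by replacing each $(\beta\;U)$ with $(\beta\;(U\;\mu\alpha N))$, so it equals $\langle(\beta\;(\lambda x(x\;M_1)\;\mu\alpha N)),(\beta\;(\lambda x(x\;M_0)\;\mu\alpha N))\rangle$. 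Since $\beta$ is here a ``dead'' $\mu$-variable (no $\mu\beta$ binder above, so it behaves as a free classical variable, just like $x$ did before), Theorem~\ref{xt1} reduces the $SN$ claim to showing that each $(\lambda x(x\;M_i)\;\mu\alpha N)\in SN$ — but this is precisely (a modest variant of) the term $(\lambda x M\;\mu\alpha N)$ analysed in part (a) of the previous proposition, and the same reduction $\curvearrowright/\hookrightarrow$ chain down to $\mu\alpha(\alpha\;(\alpha\;\underline 1))$ applies after one $\beta$-step exposing $(x\;M_i)[x:=\mu\alpha N]=(\mu\alpha N\;M_i)$. So (a) is essentially a corollary of the earlier computation.

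For part (b), I would exhibit an explicit reduction sequence showing $(\mu\beta M'\;\mu\alpha N)\not\in SN$. Starting from $(\mu\beta M'\;\mu\alpha N)$, fire the $\mu$-redex: this turns it into $\mu\beta\,M'[\beta=_r\mu\alpha N] = \mu\beta\langle(\beta\;(\lambda x(x\;M_1)\;\mu\alpha N)),(\beta\;(\lambda x(x\;M_0)\;\mu\alpha N))\rangle$. Now inside, each $(\lambda x(x\;M_i)\;\mu\alpha N)$ reduces — performing the $\beta$-step then the $\mu'$-step with $N=(\alpha\;\lambda z(\alpha\;z))$ — to a term of the shape $\mu\alpha(\alpha\;\langle(\alpha\;\langle(M_1\;M_i),(M_0\;M_i)\rangle)\dots\rangle)$, i.e. exactly the nested-pair pattern produced in part (b) of the previous proposition, which contains occurrences of $(M_i\;M_j)$ with $i\neq j$; by the Lemma, $(M_i\;M_j)\not\in SN$, hence the whole term is not $SN$. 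I would lay this out as an \texttt{eqnarray} of $\tr^*$ steps mirroring the display in the previous proof, ending ``and thus $(\mu\beta M'\;\mu\alpha N)\not\in SN$.''

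The main obstacle I anticipate is purely bookkeeping: making sure the $\mu$-substitution $[\beta=_r\mu\alpha N]$ commutes correctly with the subsequent reductions inside the pair, and that the $\mu'$-reduction of $(\,\cdot\;\mu\alpha N)$ and the $\mu$-reduction of $(\mu\beta M'\;\cdot)$ do not interfere — i.e. that the $\alpha$ and $\beta$ bindings stay properly scoped (no variable capture) when both $\mu$ and $\mu'$ redexes are present. Since $\alpha$ does not occur free in $M'$ and $\beta$ does not occur free in $N$, these substitutions are independent and the interleaving is harmless; once this is checked, both halves reduce to computations already carried out in the previous two results, so no genuinely new argument is needed — only the observation that $\mu'$ against a $\mu$-abstraction behaves, for the purposes of this counterexample, just like $\mu'$ against a $\lambda$-abstraction did.
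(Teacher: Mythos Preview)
Your plan for part (a) is almost right but has a genuine gap: the term $(\lambda x(x\;M_i)\;\mu\alpha N)$ is \emph{not} a $\curvearrowright$-situation. It has \emph{two} redexes, the $\beta$-redex you mention and also the $\mu'$-redex (the right argument begins with $\mu\alpha$). To conclude $SN$ you must handle both branches. The paper does exactly this: the $\beta$-branch gives $(\mu\alpha N\;M_i)$ and proceeds as in the previous proposition; the $\mu'$-branch gives $\mu\alpha(\alpha\;(\lambda x(x\;M_i)\;\lambda z(\alpha\;(\lambda x(x\;M_i)\;z))))$, which after two further $\beta$-steps also reaches $\mu\alpha(\alpha\;(\alpha\;(M_i\;M_i)))$. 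Only after checking both paths can one write $(\lambda x(x\;M_i)\;\mu\alpha N)\hookrightarrow\mu\alpha(\alpha\;(\alpha\;\underline 1))$.

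Your plan for part (b) is wrong, and the error is self-refuting. You propose to fire the $\mu$-redex first, obtaining $\mu\beta\,M'[\beta=_r\mu\alpha N]$. But this is precisely $\mu\beta$ applied to the term you have just shown, in part (a), to be in $SN$; prefixing a $\mu\beta$ creates no new redex, so your first step already lands in $SN$ and no infinite reduction can follow. Concretely, inside each component $(\lambda x(x\;M_i)\;\mu\alpha N)$ the only ``mixing'' the substitution can ever produce is $(M_i\;M_i)$, never a cross term $(M_i\;M_j)$ with $i\neq j$ --- your claimed shape $\langle(M_1\;M_i),(M_0\;M_i)\rangle$ simply does not arise along this path. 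The paper instead fires the $\mu'$-redex first:
\[
(\mu\beta M'\;\mu\alpha N)\ \tr\ \mu\alpha\,N[\alpha=_l\mu\beta M']\ =\ \mu\alpha(\alpha\;(\mu\beta M'\;\lambda z(\alpha\;(\mu\beta M'\;z)))) .
\]
This duplicates $\mu\beta M'$, and the two copies are now applied to \emph{different} arguments; subsequent $\mu$- and $\beta$-reductions then manufacture the cross terms $(M_1\;M_0)$ and $(M_0\;M_1)$, which are not in $SN$. The whole point of the counterexample is that the $\mu'$-step, not the $\mu$-step, is the one that creates the bad interaction --- so your choice of which redex to fire first is exactly backwards.
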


\begin{proof}
\noindent (a) $(\la x (x \; M_i) \; \mu \al N)$ has two redexes
thus \\ either
\begin{eqnarray}
(\la x (x \; M_i) \; \mu \al N) &\tr &(\mu \al N \; M_i) \nonumber \\
\, &\curvearrowright  &  \mu \al (\al ( \la z (\al \; (z \; M_i)) \; M_i))   \nonumber\\
\, &\curvearrowright  & \mu \al (\al \; (\al \; (M_i \; M_i)))  \nonumber\\
\, &\hookrightarrow   &\mu \al (\al \; (\al \;\underline{1}))
\nonumber
\end{eqnarray}
or
\begin{eqnarray}
(\la x (x \; M_i) \; \mu \al N) &\tr & \mu \al (\al (\la x (x \;
M_i) \;
\la z (\al \; (\la x (x \; M_i) \; z))))   \nonumber\\
\, & \hookrightarrow  & \mu \al (\al \; (\al \; (M_i \; M_i)))  \nonumber\\
\, & \hookrightarrow   &\mu \al (\al \; (\al \; \underline{1}))
\nonumber
\end{eqnarray}
Thus $(\la x (x \; M_i) \; \mu \al N) \hookrightarrow \mu \al (\al
\; (\al \; \underline{1}))$ and, by theorem \ref{xt1}, it follows
that $M'[x:=\mu \al N] = \langle (\be \; (\la x (x \; M_1) \; \mu
\al N)) , (\be \; (\la x (x \; M_0) \; \mu \al N))\rangle \in SN$.

\medskip
\noindent (b)

\begin{eqnarray}
 \; (\m \be M' \; \m\al N) &\tr^*  &\mu \al (\al \; (\m \be
M'\; \la z
(\al \; (\m \be M' \; z)) ))  \nonumber \\
\, &\tr^*  &\mu \al (\al \; (\m \be M'\; \la z (\al \; \m \be
\langle (\be \; (z \; M_1)), (\be \; (z \;
M_0))\rangle) ))  \nonumber \\
\, &\tr^*  &\mu \al (\al \; \mu \be \langle (\be \; (\al \; \mu
\be
\langle(\be \; \underline{1}) , (\be \; (\Delta \; \Delta))\rangle)),\nonumber\\
\, & \, & \;\;\;\;\;\;\;\;\;\;\;\;\;\;\; (\be \; (\al \; \mu \be
\langle(\be \; (\Delta \; \Delta)) , (\be \;
\underline{1})\rangle))\rangle)\nonumber
\end{eqnarray}
and thus $(\m \be M' \; \m\al N) \not\in SN$.

\end{proof}

\section{Standardization}\label{sta}

In this section we give a standardization theorem for the
$\be\m\m'$-reduction. It also holds  for the $\m\m'$-reduction and
its proof simply is a restriction of the other one.

\begin{definition}\label{def1}
\begin{enumerate}
  \item The sequence $(M_i)_{1\leq i \leq n}$ is standard iff one of the
  following cases hold:
\begin{enumerate}
  \item For all $i$, $M_i = \la x N_i$ (resp. $M_i=\mu \al N_i$, $M_i=(x \; N_i)$, $M_i=(\al \; N_i)$) and the
  sequence $(N_i)_{1\leq i \leq n}$ is standard
  \item There are standard sequences $(N_i)_{1\leq i \leq k}$ and
   $(P_i)_{k\leq i \leq n}$ such that, for $1\leq i \leq k$,
   $M_i=(N_i \; P_k)$ and, for $k\leq i \leq n$, $M_i=(N_k \;
   P_i)$.
   \item There is a standard sequence $(N_i)_{1\leq i \leq k}$ and
   $Q$
   such that,
\begin{enumerate}
  \item either, for $1\leq i \leq k$,
   $M_i=(N_i \; Q)$ and $N_k = \la x P$ and $N_{k-1}$ does not begin with
  $\la$ and $M_{k+1}=P[x:=Q]$ and the sequence  $(M_i)_{k+1\leq i \leq
  n}$ is standard.
  \item or, for $1\leq i \leq k$,  $M_i=(N_i \; Q)$ and $N_k = \mu \al P$
  and $N_{k-1}$ does not begin with
  $\mu$ and $M_{k+1}=P[\al=_rQ]$ and the sequence  $(M_i)_{k+1\leq i \leq
  n}$ is standard.
  \item or, for $1\leq i \leq k$,  $M_i=(Q \; N_i)$ and $N_k = \mu \be P$ and
  $N_{k-1}$ does not begin with
  $\mu$ and $M_{k+1}=P[\be=_lQ]$ and the sequence  $(M_i)_{k+1\leq i \leq
  n}$ is standard.
\end{enumerate}

\end{enumerate}
  \item $M \tr_{st} M'$ iff there is a standard sequence $(M_i)_{1 \leq
  i \leq n}$ such that $M=M_1$ and $M' = M_n$.

\end{enumerate}

\end{definition}

\noindent {\bf Remarks and notation}

\begin{itemize}
  \item The clauses in 1 above correspond to a definition by induction on
the ordered pair $(n, cxty(M_1))$.
\item It is easy to check that, restricted to the $\la$-calculus,
this definition is equivalent to the usual definition of a
standard reduction.

  \item Clearly, if $M \tr_{st} M'$ then $M \ras M'$. In this case, we will
  denote the length of the reduction by $lg(M \tr_{st} M')$.

\end{itemize}

\begin{lemma}\label{l2}
Assume $M\ra _{st}P$ and $N\ra _{st}Q$. Then : (a) $\mu \alpha
M\ra _{st}\mu \alpha P$,  (b) $\la x M \tr_{st} \la x P$, (c)
$(M\;N)\ra _{st}(P\;Q)$, (d)  $M[x:=N] \tr_{st} P[x:=Q]$  and (e)
for $j \in \{l,r\}$, $M[\alpha =_jN]\ra _{st}P[\alpha =_jQ]$.
\end{lemma}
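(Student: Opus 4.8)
The plan is to prove all five statements simultaneously by induction on the structure of the standard sequence witnessing $M \tr_{st} P$ (that is, on the pair $(n, cxty(M_1))$ as in the remark following Definition~\ref{def1}), since the operations in (a)--(e) must be shown to preserve standardness, and the cases of the definition interact: for instance, proving (d) for $M = (\l y M' \; R)$ will require invoking (a)--(c) on the subterms. More precisely, I would first establish (a), (b), (c) directly, then use them together with an induction on $cxty$ to get (d) and (e). For (a)--(c): given the standard sequence $(M_i)_i$ from $M$ to $P$ and $(N_i)_i$ from $N$ to $Q$, one checks that prepending the constructor $\mu\alpha(-)$, $\l x(-)$, or forming $((-)\;N)$ then $(P\;(-))$ yields a sequence matching clause 1(a) or 1(b) of Definition~\ref{def1}; the only subtlety in (c) is gluing the two sequences at the ``corner'' term $(P\;N)$, which is exactly what clause 1(b) is designed for, with $k = n$.

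For (d), the substitution case, I would argue by induction on $cxty(M_1)$ with a side induction on the length $n$ of the standard sequence from $M$ to $P$. If $n=1$ then $M=P$ and we reduce to showing $M[x:=N]\tr_{st}M[x:=Q]$, which follows by structural induction on $M$ using (a)--(c) and (e). If $n>1$, look at the first step $M=M_1 \tr M_2$ and the shape of the sequence: in clause 1(a) the head constructor is untouched and we recurse on the body; in clause 1(b) we split the sequence and recurse on each part; in clause 1(c) the head redex is a $\beta$, $\mu$ or $\mu'$ redex, and we need to commute the substitution $[x:=N]$ past this redex contraction. The key identities here are the standard ones: $(P'[y:=R])[x:=N] = (P'[x:=N])[y:=R[x:=N]]$ and similarly for $[\alpha=_r R]$ and $[\alpha=_l R]$ (with appropriate freshness conventions on bound variables), so that the image under $[x:=N]$ of the standard sequence is again built according to the same clause 1(c). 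Statement (e) is handled by the same kind of argument, using the analogous commutation lemmas for $[\alpha=_j N]$ past $\beta$, $\mu$, and $\mu'$ contractions; note $[\alpha=_rN]$ and $[\alpha=_lN]$ each distribute over application and over the other structural cases, and commute with the substitution operators appearing in clause 1(c).

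\textbf{Main obstacle.} The delicate point is clause 1(c), and specifically the requirement that $N_{k-1}$ ``does not begin with $\l$'' (resp. ``$\mu$''): this is the condition that makes a reduction standard (no redex is created to the left of one already contracted and then ignored). After applying $[x:=N]$ or $[\alpha=_jN]$, I must check that this ``does not begin with'' condition is preserved for the transformed sequence. For a $\l$-variable substitution this is immediate since $[x:=N]$ never turns a non-$\l$-headed term into a $\l$-headed one unless the term was the variable $x$ itself, which has to be ruled out or handled separately (if $N_{k-1}$ were $x$, then $N_{k-1}[x:=N]=N$ could begin with $\l$); a careful reading shows this case cannot actually arise in a standard sequence of the relevant form, but it must be addressed. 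For the $\mu$-variable substitutions $[\alpha=_jN]$ the analysis is similar but one also has to track that $[\alpha=_rN]$ replaces $(\alpha\;U)$ by $(\alpha\;(U\;N))$, which is $\alpha$-headed, hence harmless, and likewise for $[\alpha=_lN]$. So the real work is a somewhat tedious but routine verification, case by case through Definition~\ref{def1}, that each of the five operations sends a standard sequence to a standard sequence; I expect no conceptual difficulty beyond bookkeeping the freshness of bound variables and the ``does not begin with'' side conditions.
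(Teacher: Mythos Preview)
Your plan is correct and coincides with the paper's own approach: (a)--(c) are immediate from the definition, and (d)--(e) are handled by induction on $(lg(M\tr_{st}P),\,cxty(M))$ together with a case analysis on the clauses of Definition~\ref{def1}, exactly as the paper indicates. One small slip to fix: you first (correctly) name the induction order as $(n,\,cxty(M_1))$ but later write ``induction on $cxty(M_1)$ with a side induction on $n$'', which is the wrong lexicographic order---in clause 1(c) the tail sequence starts at $M_{k+1}$ (e.g.\ $P[x:=Q]$), whose complexity may exceed that of $M_1$, so only the strict drop in length licenses the recursive call there.
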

\begin{proof}
(a), (b) and (c) are immediate. (d) and (e) are proved  by
induction on $(lg(M\ra _{st}P), cxty(M))$ and a straightforward
case analysis on the definition of a standard sequence bringing
from $M$ to $P$.
\end{proof}

\begin{lemma}\label{st1}
Assume $M\tr_{st} P$ and $P\ra Q$. Then $M\ra _{st}Q$.
\end{lemma}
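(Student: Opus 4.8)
The plan is to proceed by induction on the pair $(lg(M \tr_{st} P), cxty(M))$, following exactly the same structure as the definition of a standard sequence. Fix a standard sequence $(M_i)_{1\leq i\leq n}$ witnessing $M = M_1 \tr_{st} P = M_n$, and a redex fired in $P \tr Q$. The work is a case analysis on which clause of Definition \ref{def1} produced the sequence, combined with a sub-analysis on where the fired redex sits inside $P$.

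First I would handle the ``congruence'' clauses 1(a) and 1(b). Here $P$ has the shape $\la x P'$, $\mu\al P'$, $(x\;P')$ or $(\al\;P')$, and the last component of the standard sequence of subterms reduces by one step to the corresponding component of $Q$; the induction hypothesis (applied with a strictly shorter standard sequence, since we strip one constructor) gives the inner standardization, and Lemma \ref{l2}(a),(b) repackages it. Clause 1(b)-for-application, i.e.\ case 1(b) of the definition with $P = (N_k\;P_m)$ (using the notation there, with the two standard subsequences meeting at index $k$): the fired redex lies either inside $N_k$, or inside $P_m$, or it is the head redex of $(N_k\;P_m)$ itself. The first two are handled by the induction hypothesis on the relevant subsequence plus Lemma \ref{l2}(c); the third is the delicate one — firing the head redex of $(N_k\;P_m)$ turns the sequence into one of the form in clause 1(c), and one must check that the side conditions there ($N_{k-1}$ does not begin with the relevant binder) are met, which holds because the subsequence $(N_i)$ was already standard.

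The genuinely hard part is clause 1(c): here $P = M_n$ is the last term of the tail sequence $(M_i)_{k+1\leq i\leq n}$, which is itself standard and shorter, so if the redex $P \tr Q$ is fired ``inside'' that tail, the induction hypothesis applied to $M_{k+1} \tr_{st} P$ gives $M_{k+1} \tr_{st} Q$, and then re-prepending the $k$ initial steps $(M_i)_{1\leq i\leq k}$ (together with the contraction step $M_k \tr M_{k+1}$) yields a standard sequence $M \tr_{st} Q$ — one must verify this re-prepending preserves standardness, which is immediate from the shape of clause 1(c). The subtlety is that $Q$ might not literally be reached as the last term of the tail: the IH gives some standard sequence from $M_{k+1}$ to $Q$, and one must argue that gluing it after the prefix still satisfies clause 1(c). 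This is really a bookkeeping argument about the clause, not a new idea, but it is where all the care goes: one needs that $M_{k+1} = P[x:=Q']$ (or the $\mu$-analogues) and that the substitution lemma \ref{l2}(d),(e) is not even needed here — the tail already starts at $M_{k+1}$, so it is pure concatenation. I expect the only real obstacle to be making sure every sub-case of ``where is the redex in $P$'' lands in a bona fide clause of Definition \ref{def1} with its side conditions intact; once the case split is organized so that each branch either shrinks the standard sequence or shrinks $cxty(M)$, the induction closes.

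\begin{proof}
By induction on $(lg(M\tr_{st} P), cxty(M))$, by cases on the clause of Definition \ref{def1} used for the standard sequence from $M$ to $P$, and a sub-analysis on the position of the redex contracted in $P\ra Q$; in each case one applies the induction hypothesis to a strictly smaller instance and then repackages the result using Lemma \ref{l2}. In the congruence cases 1(a) the term $P$ is $\la x P'$, $\mu\al P'$, $(x\;P')$ or $(\al\;P')$; the contracted redex is inside $P'$, so $P'\ra Q'$ with $Q$ of the same shape, the induction hypothesis gives the inner standard reduction (the relevant standard subsequence being strictly shorter), and Lemma \ref{l2}(a),(b) and the congruence clauses give $M\tr_{st} Q$. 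In case 1(b), $P=(N_k\;P_m)$; if the redex is inside $N_k$ or inside $P_m$, apply the induction hypothesis to the corresponding standard subsequence and use Lemma \ref{l2}(c); if it is the head redex of $(N_k\;P_m)$, then it is a $\be$-, $\m$- or $\m'$-redex and the sequence extends to a standard one of the form 1(c), the side condition being inherited from the standardness of $(N_i)$. Finally, in case 1(c) the term $P$ is the last term of the tail standard sequence $(M_i)_{k+1\leq i\leq n}$, which is strictly shorter; the induction hypothesis gives $M_{k+1}\tr_{st} Q$, and prepending to it the steps $M_1\ra\cdots\ra M_k\ra M_{k+1}$ of clause 1(c) yields a standard sequence from $M$ to $Q$. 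This exhausts the cases.
\end{proof}
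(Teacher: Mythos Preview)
There is a genuine gap in your handling of case 1(b) when the head redex of $P=(N_k\;P_n)$ is contracted. You assert that ``the sequence extends to a standard one of the form 1(c), the side condition being inherited from the standardness of $(N_i)$'', but neither half of this claim is correct. First, clause 1(c) requires one side of the application to be a \emph{single fixed term} throughout the entire initial segment (the right argument $Q$ in 1(c)(i)--(ii), the left $Q$ in 1(c)(iii)); in a 1(b)-sequence, both sides vary --- first $N_1,\dots,N_k$ on the left with $P_k$ fixed on the right, then $P_k,\dots,P_n$ on the right with $N_k$ fixed on the left --- so simply appending the contractum cannot yield a sequence matching any sub-clause of 1(c). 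Second, even when one side happens to be constant, the side condition ``$N_{k-1}$ does not begin with $\lambda$ (resp.\ $\mu$)'' is \emph{not} a consequence of $(N_i)$ being standard: by clause 1(a), a standard sequence may perfectly well have $N_{k-1}=\lambda x R'$ and $N_k=\lambda x R$. (The same oversight affects the $(x\;N_i)$ sub-case of 1(a): the head of $(x\;N_n)$ can be a $\mu'$-redex, so the contraction need not be ``inside $P'$''.)

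The repair is exactly the step the paper isolates as the only non-immediate one, and it is where Lemma~\ref{l2}(d),(e) enters --- not in clause 1(c), where you correctly observe it is unnecessary, but here. Take the $\mu$-case, $N_k=\mu\alpha R$ and $Q=\mu\alpha R[\alpha=_r P_n]$. Locate the \emph{first} index $j$ with $N_j=\mu\alpha R_1$; then $N_{j-1}$ does not begin with $\mu$, and by clause 1(a) one has $R_1\tr_{st} R$. Fire the redex at this earlier point, with the right argument still in its \emph{initial} form $P_k$: the prefix $(N_1\;P_k),\dots,(N_j\;P_k)$ now meets the shape and side condition of 1(c)(ii), and the required standard tail $\mu\alpha R_1[\alpha=_r P_k]\tr_{st}\mu\alpha R[\alpha=_r P_n]$ is supplied by Lemma~\ref{l2}(a),(e) from $R_1\tr_{st} R$ and $P_k\tr_{st} P_n$. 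The $\beta$-case is analogous via Lemma~\ref{l2}(d); the $\mu'$-case is symmetric (one fires early in the right-hand sequence, with the left argument in its initial form $N_1$). Without this use of the substitution lemma the argument does not close.
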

\begin{proof}
This is proved  by induction on $(lg(M \tr_{st} P), cxty(M))$ and
by case analysis on the reduction $M \tr_{st} P$. The only  case
which is not immediate is the following: $M=(M_1 \; M_2) \ras (N_1
\; M_2) \ras (N_1 \; N_2)=P$ where $M_1 \tr_{st} N_1$ and $M_2
\tr_{st} N_2$. If the redex reduced in $P \ra Q$ is in $N_1$ or
$N_2$ the result follows immediately from the induction
hypothesis. Otherwise, assume, for example that $N_1=\m\al R$ and
$Q=R[\al=_r N_2]$. Let the reduction $M_1 \tr_{st} N_1$ be as
follows: $M_1 \tr_{st} \m\al R_1 \tr_{st} \m\al R$ where $\m\al
R_1$ is the first term in the reduction that begins with $\m$. It
follows then from lemma \ref{l2} that the following reduction is
standard. $M=(M_1 \; M_2) \tr_{st} (\m\al R_1 \; M_2) \ra \m\al
R_1[\al=_r M_2] \tr_{st} \m\al R[\al=_r N_2]$.
\end{proof}

\begin{theorem}\label{st2}
Assume $M\ras P$.\ Then $M\ra _{st}P$.
\end{theorem}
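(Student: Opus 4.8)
The plan is to prove Theorem~\ref{st2} by induction on the length of the reduction $M \ras P$, using Lemma~\ref{st1} as the key inductive step and Lemma~\ref{l2}(c) (or rather the reflexivity of $\tr_{st}$) to start.

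\textbf{Base case.} If $M \ras P$ has length $0$, then $M = P$. The sequence $(M)$ of length $1$ is standard by clause 1(a) of Definition~\ref{def1} (applied repeatedly down to a variable, or directly since a one-element sequence is trivially standard in every clause), so $M \tr_{st} M$, i.e. $M \tr_{st} P$.

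\textbf{Inductive step.} Suppose $M \ras P$ has length $n+1$. Write it as $M \ras P' \tr P$ where $M \ras P'$ has length $n$. By the induction hypothesis, $M \tr_{st} P'$. Now apply Lemma~\ref{st1} with the single reduction step $P' \tr P$: it gives $M \tr_{st} P$, which is exactly what we want.

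\textbf{Main obstacle.} The real content of the standardization theorem has already been pushed into Lemma~\ref{st1} (and Lemma~\ref{l2}), so the proof of Theorem~\ref{st2} itself is a one-line induction. If I were proving the whole development from scratch, the hard part would be Lemma~\ref{st1}: showing that prepending a standard reduction to a single arbitrary reduction step again yields a standard reduction. The delicate case is when $M \tr_{st} P$ ends with a reduction $(M_1\,M_2) \tr_{st} (N_1\,M_2) \tr_{st} (N_1\,N_2)$ (clause 1(b)) and the step $P \tr Q$ fires a $\m$ (or $\be$ or $\m'$) redex created at the root because $N_1$ has become a $\m$-abstraction; one must locate the \emph{first} term $\m\al R_1$ along $M_1 \tr_{st} N_1$ that begins with $\m$, reduce at the root there, and then invoke Lemma~\ref{l2}(e) to propagate the residual substitution $[\al =_r -]$ along the tails of both standard reductions so that the rearranged reduction is still standard (matching clause 1(c)ii of the definition). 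That reshuffling, together with the induction on $(lg, cxty)$ needed to keep it well-founded, is where all the work lies; given those lemmas, Theorem~\ref{st2} is immediate.
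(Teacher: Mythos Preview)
Your proof is correct and follows exactly the paper's approach: induction on the length of the reduction $M \ras P$, with the inductive step handled by Lemma~\ref{st1}. Your remarks about where the real work lies (the root-redex case of Lemma~\ref{st1} and the use of Lemma~\ref{l2}(e)) also match the paper's treatment.
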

\begin{proof}
By induction on the length of the reduction $M \ras M_1$. The result
follows immediately from lemma \ref{st1}.
\end{proof}

\end{document}